\documentclass[10pt,a4paper]{amsart}
\usepackage{amsmath}
\usepackage{amsfonts}
\usepackage{amssymb}
\usepackage[american]{babel}
\usepackage[colorlinks=true,allcolors=blue]{hyperref}
\usepackage{stmaryrd}
\usepackage{pgf,tikz}

\title[Addition Theorems in $\mathbb{F}_p$ via the polynomial method]{Addition Theorems in $\mathbb{F}_p$\\ via the polynomial method}
\author{\'Eric Balandraud}
\date{}

\newtheorem{defi}{Definition}
\newtheorem{theo}{Theorem}
\newtheorem*{conj}{Conjecture}
\newtheorem{prop}{Proposition}
\newtheorem{rem}{Remark}

\begin{document}

\maketitle

\begin{abstract}
In this article, we use the Combinatorial Nullstellensatz to give new proofs of the Cauchy-Davenport, the Dias da Silva-Hamidoune and to generalize a previous addition theorem of the author. Precisely, this last result proves that for a set $A\subset \mathbb{F}_p$ such that $A\cap(-A)=\emptyset$ the cardinality of the set of subsums of at least $\alpha$ pairwise distinct elements of $A$ is:
\[|\Sigma_\alpha(A)|\geq\min\left\{p,\frac{|A|(|A|+1)}{2}-\frac{\alpha(\alpha+1)}{2}+1\right\},\]
the only cases previously known were $\alpha\in\{0,1\}$.

The Combinatorial Nullstellensatz is used, for the first time, in a direct and in a reverse way. The direct (and usual) way states that if some coefficient of a polynomial is non zero then there is a solution or a contradiction. The reverse way relies on the coefficient formula (equivalent to the Combinatorial Nullstellensatz). This formula gives an expression for the coefficient as a sum over any cartesian product.

For these three addition theorems, some arithmetical progressions (that reach the bounds) will allow to consider cartesian products such that the coefficient formula is a sum all of whose terms are zero but exactly one. Thus we can conclude the proofs without computing the appropriate coefficients.
\end{abstract}

\section{Introduction}

In this article, $p$ is always a prime number, given two non-empty subsets $A$ and $B$ of $\mathbb{F}_p$, we denote their sumset $A+B=\{a+b\mid a\in A,\ b\in B\}$.

The first addition theorem in $\mathbb{F}_p$ is the Cauchy-Davenport theorem.
\begin{theo}[Cauchy-Davenport \cite{Ca,Da1,Da2}]
Let $A$ and $B$ be two non empty subsets of $\mathbb{F}_p$, then:
\[|A+B|\geqslant\min\left\{p,|A|+|B|-1\right\}.\]
\end{theo}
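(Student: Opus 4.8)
The plan is a short argument by contradiction, powered by the Combinatorial Nullstellensatz in the shape of its equivalent coefficient formula. Write $n=|A|$, $m=|B|$ and suppose that $|A+B|<\min\{p,n+m-1\}$; then $k:=|A+B|$ satisfies both $k<p$ and $k\leqslant n+m-2$. Because $k\leqslant n+m-2$, I can choose integers $i,j\geqslant 0$ with $i+j=k$, $i\leqslant n-1$ and $j\leqslant m-1$ (for instance $i=\min\{n-1,k\}$ and $j=k-i$), and then fix subsets $S_1\subseteq A$ and $S_2\subseteq B$ with $|S_1|=i+1$ and $|S_2|=j+1$. This one setup will also take care of the range in which $\min\{p,n+m-1\}=p$, so no separate pigeonhole argument is needed.

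The polynomial I would use is
\[
f(x,y)=\prod_{c\in A+B}(x+y-c)\in\mathbb{F}_p[x,y],
\]
which has total degree exactly $k$. Collecting the degree-one part $x+y$ from every factor shows that the homogeneous component of $f$ of degree $k$ is $(x+y)^k$, so the coefficient of $x^iy^j$ in $f$ equals $\binom{k}{i}$; since $0\leqslant i\leqslant k<p$, the integer $k!$ is prime to $p$, hence $\binom{k}{i}$ is nonzero in $\mathbb{F}_p$. On the other hand $\deg f=k=(|S_1|-1)+(|S_2|-1)$, so the coefficient formula expresses that same coefficient of $x^{|S_1|-1}y^{|S_2|-1}$ (that is, of $x^iy^j$) as
\[
\sum_{(s_1,s_2)\in S_1\times S_2}\frac{f(s_1,s_2)}{\prod_{t\in S_1\setminus\{s_1\}}(s_1-t)\cdot\prod_{t\in S_2\setminus\{s_2\}}(s_2-t)}.
\]
But every pair $(s_1,s_2)\in S_1\times S_2$ lies in $A\times B$, so $s_1+s_2\in A+B$ and the factor with $c=s_1+s_2$ makes $f(s_1,s_2)=0$; hence every summand is $0$ and the coefficient is $0$, contradicting $\binom{k}{i}\neq 0$. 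Therefore $|A+B|\geqslant\min\{p,n+m-1\}$.

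Since the argument is this short, the point is not a single hard step but the choice of $f$: one wants a polynomial whose relevant coefficient is \emph{visibly} a nonzero binomial coefficient while $f$ nonetheless vanishes on every product subgrid of $A\times B$, and $\prod_{c\in A+B}(x+y-c)$ achieves both at once. The only role of the prime is the non-vanishing of $\binom{k}{i}$ modulo $p$, which is exactly what $|A+B|<p$ guarantees, whereas $|A+B|\leqslant n+m-2$ is precisely what allows a grid of the right total size $k=(|S_1|-1)+(|S_2|-1)$ to be housed inside $A\times B$. Following the philosophy advertised in this paper, one can go further and avoid any fact about binomial coefficients: up to translation the extremal instances are $A$ and $B$ arithmetic progressions with a common difference — so that $A+B$ is the length-$(n+m-1)$ progression realizing the bound — and feeding the coefficient formula a Cartesian product assembled from such progressions leaves just one non-zero term, reading off the coefficient directly.
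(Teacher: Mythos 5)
Your proof is correct, but it takes a different route at the decisive step. The skeleton is shared with the paper --- argue by contradiction, take the polynomial $\prod_{c}(x+y-c)$ of degree matching a grid inside $A\times B$ on which it vanishes --- yet where the paper deliberately refuses to compute the relevant coefficient, you compute it: you read it off the top homogeneous component $(x+y)^k$ as $\binom{k}{i}$ and use $k<p$ to see it is nonzero, which is essentially Alon's original argument, with the coefficient formula used in the reverse direction (vanishing of $f$ on the full-size grid $S_1\times S_2$ forces the coefficient to be $0$) in place of a direct appeal to the Combinatorial Nullstellensatz. The paper instead keeps the coefficient uncomputed: it encloses $A+B$ in a set $C$ of size exactly $\min\{p-1,n+m-2\}$, introduces the defect $\delta=\max\{0,n+m-1-p\}$, builds a model polynomial $Q$ from the arithmetic progressions $A'=[1,n]$, $B'=[1,m-\delta]$ (which has the same top-degree coefficients as $P$), and applies the coefficient formula on $A'\times B'$, where exactly one term survives --- precisely the idea you only sketch in your closing remark. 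Your version is shorter and cleaner here: choosing $i+j=k$ with $i\leq n-1$, $j\leq m-1$ and shrinking the grid to $S_1\times S_2$ absorbs the wrap-around case $n+m-1>p$ without any padding set $C$ or parameter $\delta$, and the nonvanishing of $\binom{k}{i}$ modulo $p$ is immediate. What the paper's detour buys is uniformity of method: the ``one surviving term on an arithmetic-progression grid'' argument is exactly the one that still works for the Dias da Silva--Hamidoune theorem and for Theorem \ref{Main}, where no binomial-theorem shortcut for the coefficient is available, whereas your computation of the coefficient does not generalize to those settings.
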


In the seminal article \cite{Al}, Alon described the Combinatorial Nullstellensatz and the polynomial method that relies on it (described in section $\ref{PM}$). The method allows to prove that a combinatorial problem has a solution or a contradiction, just by computing a certain coefficient in a polynomial. The combinatorial problem is reduced to a computation problem. The Cauchy-Davenport is one of the first example developed in this article. The binomial theorem is the key point that allows to prove that the proper coefficient is non zero.

Surprisingly a slight variation of the definition of the sumset has revealed itself much more difficult to tackle. For two subsets  $A$ and $B$ of $\mathbb{F}_p$, we define their restricted sumset: $A\dot{+}B=\{a+b\mid a\in A,\ b\in B,\ a\neq b\}$. In $1964$, Erd\H os and Heilbronn made the following famous conjecture:
\begin{conj}[Erd\H os-Heilbronn] Let $A\subset\mathbb{F}_p$, then:
\[|A\dot{+}A|\geqslant\min\left\{p,2|A|-3\right\}.\]
\end{conj}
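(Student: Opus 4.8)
The plan is to run the polynomial method exactly as Alon, Nathanson and Ruzsa did for this conjecture, and then to indicate where the ``reverse'' use of the Combinatorial Nullstellensatz announced in the abstract enters. Write $k=|A|$. It suffices to treat the case $2k-3\le p$: if $k\ge(p+3)/2$ (which forces $p$ odd) then applying that case to a subset $A'\subseteq A$ with $|A'|=(p+3)/2$ gives $|A'\dot{+}A'|\ge\min\{p,p\}=p$, hence $A\dot{+}A=\mathbb{F}_p$. So assume $2k-3\le p$, and suppose for contradiction that $C:=A\dot{+}A$ has $|C|\le 2k-4$.

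Enlarge $C$ to a set $E\subseteq\mathbb{F}_p$ with $|E|=2k-4$ (possible since $2k-4\le p-1$) and set
\[
f(x,y)=(x-y)\prod_{c\in E}(x+y-c)\in\mathbb{F}_p[x,y],\qquad\deg f=2k-3.
\]
By construction $f$ vanishes on all of $A\times A$: the factor $x-y$ kills the diagonal, while if $a\ne b$ then $a+b\in A\dot{+}A=C\subseteq E$, so some factor $x+y-c$ vanishes. The degree-$(2k-3)$ homogeneous part of $f$ is $(x-y)(x+y)^{2k-4}$, so the coefficient of $x^{k-1}y^{k-2}$ in $f$ equals
\[
\binom{2k-4}{k-2}-\binom{2k-4}{k-3}=\frac{1}{k-1}\binom{2k-4}{k-2},
\]
a Catalan number. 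Since $2k-4\le p-1$ this binomial coefficient is a unit of $\mathbb{F}_p$, and since $1\le k-1\le(p+1)/2<p$ so is $k-1$; hence the coefficient is nonzero in $\mathbb{F}_p$. Its total degree $(k-1)+(k-2)$ equals $\deg f$, and $|A|=k>k-1$ as well as $|A|=k>k-2$, so the Combinatorial Nullstellensatz in its direct form produces $a,b\in A$ with $f(a,b)\ne 0$, contradicting that $f$ vanishes on $A\times A$. Therefore $|A\dot{+}A|\ge 2k-3$, which with the reduction gives $|A\dot{+}A|\ge\min\{p,2|A|-3\}$.

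The step carrying the real weight, and the one the paper wants to sidestep, is the nonvanishing of that Catalan coefficient modulo $p$: in the direct route it rests on the identity $\binom{2n}{n}-\binom{2n}{n+1}=\frac{1}{n+1}\binom{2n}{n}$, that is, on an explicit coefficient computation. The reverse plan avoids identifying the coefficient at all: by the coefficient formula it equals a sum over any Cartesian product $S_1\times S_2$ with $|S_1|=k$ and $|S_2|=k-1$, and by choosing $S_1$ and $S_2$ to be suitable arithmetic progressions --- the very configurations for which the value $2k-3$ is attained --- one arranges that $f$ vanishes at every point of $S_1\times S_2$ but one, collapsing the sum to a single manifestly nonzero term. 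Picking those progressions, and checking that the surviving term is a unit of $\mathbb{F}_p$ precisely under the hypothesis $2k-3\le p$, is where I expect the delicate bookkeeping to lie.
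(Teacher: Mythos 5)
Your proof is correct, but it follows a genuinely different route from the paper. The paper never proves the Erd\H{o}s--Heilbronn bound directly: it obtains it as the case $h=2$ of the Dias da Silva--Hamidoune theorem, $|h^\wedge A|\geq\min\{p,h(|A|-h)+1\}$, and its new proof of that theorem is precisely the ``reverse'' use of the coefficient formula --- the cartesian product $B_1\times\dots\times B_h$ is built from the extremal progression $[1,d]$ so that the auxiliary polynomial vanishes at every point of the product except one, and the coefficient is seen to be nonzero without ever being computed (the excess $\delta$ handles the case $h(d-h)+1>p$ in place of your passage to a subset $A'$). You instead give the classical Alon--Nathanson--Ruzsa argument for the restricted sumset itself: the polynomial $(x-y)\prod_{c\in E}(x+y-c)$, the identification of the relevant coefficient of $x^{k-1}y^{k-2}$ with the Catalan number $\frac{1}{k-1}\binom{2k-4}{k-2}$, and the observation that both $\binom{2k-4}{k-2}$ and $k-1$ are prime to $p$ under $2k-3\leq p$; together with your reduction to that case this is a complete and correct proof (the degenerate cases $|A|\leq 1$ are trivially fine). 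What your argument buys is brevity and self-containedness for this one statement, at the cost of exactly the explicit coefficient identity the paper is designed to avoid; what the paper's route buys is the full $h$-fold theorem and a proof in the ``no computation'' spirit that is the article's point. Your closing paragraph correctly anticipates how the reverse method would be set up here, but since you do not carry out that bookkeeping, your actual proof is the direct one --- which is fine, just not the paper's.
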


The first proof follows from the following generalization in $1994$ by Dias da Silva and Hamidoune, introducing the $h$-fold restricted sumset:
\begin{defi} Let $A\subset \mathbb{F}_p$ and $h\in[0,|A|]$, we denote $h^\wedge A$ the set of subsums of $h$ pairwise distinct elements of $A$:
\[h^\wedge A=\{a_1+\dots+a_h\mid a_i\in A,\ a_i\neq a_j\}.\]
\end{defi}

\begin{theo}[Dias da Silva, Hamidoune \cite{DH}] Let $A\subset \mathbb{F}_p$. For a natural integer $h\in[0,|A|]$,
\[|h^\wedge A|\geqslant\min\{p,h(|A|-h)+1\}.\]
\end{theo}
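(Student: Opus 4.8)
The plan is to prove the Dias da Silva--Hamidoune theorem by the polynomial method, following the strategy announced in the abstract: reduce the statement to a coefficient computation via the Combinatorial Nullstellensatz, and then evaluate that coefficient not by a direct algebraic expansion but by the \emph{coefficient formula}, choosing a cartesian product on which all but one summand vanishes. First I would set up the contradiction hypothesis. Suppose $|h^\wedge A| \leqslant h(|A|-h)$ and $h^\wedge A \neq \mathbb{F}_p$; write $n = |A|$, let $E = h^\wedge A$ (or a set of that size containing it), and introduce the polynomial
\[
P(x_1,\dots,x_h) = \prod_{1\leqslant i<j\leqslant h}(x_j - x_i)\cdot\prod_{e\in E}\Bigl(\sum_{i=1}^h x_i - e\Bigr)
\]
in $h$ variables over $\mathbb{F}_p$. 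The Vandermonde factor forces the relevant evaluations at tuples of pairwise distinct elements of $A$ to behave well, and the second factor vanishes whenever $x_1+\dots+x_h$ lands in $E$. Evaluating $P$ at any tuple $(a_1,\dots,a_h)\in A^h$: if the $a_i$ are pairwise distinct then $a_1+\dots+a_h\in h^\wedge A\subseteq E$ so the second product vanishes; if two coordinates coincide the Vandermonde vanishes. Hence $P$ vanishes on all of $A^h$.

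The degree of $P$ is $\binom{h}{2} + |E| \leqslant \binom{h}{2} + h(n-h)$. The next step is to identify a monomial $x_1^{d_1}\cdots x_h^{d_h}$ of this total degree with each $d_i \leqslant n-1$ and with nonzero coefficient: the natural candidate is the monomial whose exponents are a permutation of $(n-1, n-2, \dots, n-h)$ shifted appropriately, i.e. the exponent vector coming from pairing the Vandermonde's contribution $(h-1, h-2, \dots, 1, 0)$ with an equal spread of the $(n-h)$-fold power of the linear form $\sum x_i$; one checks $\sum_i d_i = \binom{h}{2} + h(n-h)$ and $\max_i d_i = n-1$. If the coefficient of such a monomial were nonzero, the Combinatorial Nullstellensatz (the nonvanishing form) would hand us a point of $A^h$ where $P \neq 0$, contradicting the previous paragraph; so all such top-degree coefficients must vanish. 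The crux is therefore to show that at least one of them is in fact \emph{nonzero} --- this is the real content, and it is exactly where I expect the main obstacle to lie.

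To extract that coefficient I would use the coefficient formula rather than brute expansion: the coefficient of $\prod_i x_i^{d_i}$ in $P$ equals $\sum_{(c_1,\dots,c_h)\in C_1\times\cdots\times C_h} \frac{P(c_1,\dots,c_h)}{\prod_i \prod_{c\in C_i, c\neq c_i}(c_i-c)}$ for \emph{any} sets $C_i$ with $|C_i| = d_i+1$. The idea is to pick the $C_i$ to be suitable arithmetic progressions --- concretely nested intervals of integers (viewed in $\mathbb{F}_p$, assuming $p$ large enough; small $p$ being handled separately since then the bound $\min\{p,\dots\}$ is just $p$ and follows from Cauchy--Davenport-type arguments) --- chosen so that a tuple $(c_1,\dots,c_h)$ in the product contributes $0$ unless it is, up to order, \emph{strictly increasing and consecutive}, because otherwise either two coordinates coincide (Vandermonde kills it) or the sum $c_1+\dots+c_h$ is forced into the range that makes the second product vanish. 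With the progressions arranged so that exactly one tuple survives, the sum collapses to a single explicit nonzero term (a product of factorials, nonzero mod $p$ in the large-$p$ regime), proving the coefficient is nonzero.

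The main obstacle, as flagged, is the combinatorial-geometric bookkeeping in that last step: one must design the nesting of the progressions $C_1 \subset C_2 \subset \cdots$ (or an analogous configuration) and the target exponent vector $(d_i)$ so that \emph{simultaneously} (i) $\sum d_i$ equals the degree of $P$, (ii) each $d_i \leqslant n-1$ so the Combinatorial Nullstellensatz applies with ground set $A$ of size $n$, and (iii) the vanishing of $P$ on the chosen cartesian product is total except at one tuple. Verifying (iii) amounts to checking that for every non-surviving tuple either a repeat occurs or the partial sum lands in an interval of length $\geqslant |E|$ that we have pre-committed to being inside $E$ --- which in turn constrains how $E$ (of size $\leqslant h(n-h)$) can be positioned, and here one uses that an interval of $h(n-h)+1$ consecutive integers cannot be disjoint from $h^\wedge A$ when $A$ itself is spread across an interval realizing the extremal configuration. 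Managing these constraints together, rather than any single computation, is the delicate part; once the configuration is pinned down, the conclusion is immediate.
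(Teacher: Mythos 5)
Your setup coincides with the paper's: the polynomial $P=\prod_{1\le i<j\le h}(X_j-X_i)\prod_{e\in E}(X_1+\dots+X_h-e)$ vanishes on $A^h$, the exponent vector $(n-h,\dots,n-1)$ has the right total degree and stays below $|A|$, and the Combinatorial Nullstellensatz reduces everything to showing that one coefficient is nonzero. The genuine gap is in how you propose to extract that coefficient. You apply the coefficient formula to $P$ \emph{itself} over a grid $C_1\times\dots\times C_h$ of nested integer intervals, and you need every non-surviving grid point to be killed either by the Vandermonde factor or because its sum ``lands in an interval that we have pre-committed to being inside $E$''. But $E$ is an arbitrary unknown set of size at most $h(n-h)$ containing $h^\wedge A$, produced by the contradiction hypothesis; you have no control over which residues it contains, so no interval of grid sums can be pre-committed to lie in $E$, and the closing appeal to ``$A$ realizing the extremal configuration'' is empty, since $A$ is a general set. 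The missing idea --- and the actual crux of the paper's proof --- is a comparison step: the coefficient of a monomial of total degree $\deg P$ depends only on the top homogeneous component of $P$, namely $(X_1+\dots+X_h)^{|E|}\prod_{i<j}(X_j-X_i)$, hence it is unchanged if $E$ is replaced by an interval $R$ of the same cardinality. One then applies the coefficient formula not to $P$ but to the model polynomial $Q=\prod_{x\in R}(X_1+\dots+X_h-x)\prod_{i<j}(X_j-X_i)$ over the nested grid $B_i=[1,d-h+i]$ (suitably trimmed), where the vanishing analysis is fully explicit: sums of pairwise distinct $b_i\in B_i$ fill an interval exceeding $R$ by exactly one value, attained at exactly one grid point, so the coefficient formula has a single nonzero term. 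Without this transfer to $Q$, your step (iii) cannot be carried out.

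A secondary problem is the case $h(n-h)+1>p$, where the bound is $p$: you dismiss it as following ``from Cauchy--Davenport-type arguments'', but it does not. Cauchy--Davenport controls unrestricted sumsets, not $h$-fold restricted sums, and passing to a subset $A'\subset A$ only lowers the quantity $h(|A'|-h)+1$ in steps of $h$, so it never recovers the value $p$ without already knowing the theorem in this boundary regime. The paper absorbs this case into the same polynomial argument via $\delta=\max\{0,h(d-h)+1-p\}$ (after discarding elements of $A$ so that $\delta<h$), taking $|C|=p-1$ and deleting one element from each of the first $\delta$ sets of the nested grid, which shifts the target monomial accordingly; your argument needs an analogous device rather than a separate reduction.
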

Their proof relies on exterior algebras. A second proof of this result was given the following year by Alon, Nathanson and Rusza. They applied the Combinatorial Nullstellensatz \cite{ANR1,ANR2}. To prove that the proper coefficient is non zero, they consider another combinatorial interpretation of it through strict ballot number.

Following this method, the author could prove a further statement considering the set of all subsums.
\begin{defi} Let $A\subset \mathbb{F}_p$, we denote its set of subsums by:
\begin{align*}
\Sigma(A)\  & =\left\{\sum_{x\in I}x\mid\emptyset\subset I\subset A\right\}=\bigcup_{h=0}^{|A|}(h^\wedge A)\\
\intertext{and we also denote its set of non-trivial subsums by:}
\Sigma^*(A) & =\left\{\sum_{x\in I}x\mid\emptyset\subsetneq I\subset A\right\}=\bigcup_{h=1}^{|A|}(h^\wedge A).
\end{align*}
\end{defi}

For the following result the computation of the coefficient relied on determinants of binomial coefficients: binomial determinants considered in the work of Gessel and Viennot.
\begin{theo}[Balandraud \cite{EB}]\label{EB} Let $A\subset\mathbb{F}_p$, such that $A\cap(-A)=\emptyset$. We have
\begin{align*}
|\Sigma(A)|\ & \geqslant\min\left\{p,\frac{|A|(|A|+1)}{2}+1\right\},\\
|\Sigma^*(A)| & \geqslant\min\left\{p,\frac{|A|(|A|+1)}{2}\right\}.
\end{align*}
\end{theo}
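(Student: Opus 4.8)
The plan is to prove the theorem via the Combinatorial Nullstellensatz, following the polynomial-method philosophy already used in the excerpt, but using the \emph{coefficient formula} (the ``reverse'' direction) together with a judiciously chosen arithmetic progression. Suppose for contradiction that $|\Sigma(A)|$ is strictly smaller than both $p$ and $\tfrac{|A|(|A|+1)}{2}+1$; write $n=|A|$ and let $m=|\Sigma(A)|\le \tfrac{n(n+1)}{2}$. Encode the subsums: with $A=\{a_1,\dots,a_n\}$, every subset $I\subset A$ contributes $\sum_{i\in I}a_i=\sum_{i=1}^n \varepsilon_i a_i$ with $\varepsilon_i\in\{0,1\}$, so $\Sigma(A)$ is the image of the ``cube'' $\{0,1\}^n$ under the linear form. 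The hypothesis $A\cap(-A)=\emptyset$ guarantees that distinct $\{0,1\}$-tuples with distinct \emph{supports}\,--\,or rather, that the elements $a_i$ are such that no subset sum accidentally collapses in a way that would let us do better; more precisely it is exactly the condition needed so that the polynomial we build is nonzero. The $\Sigma^*$ statement follows from the $\Sigma$ statement applied to $A$ (or directly by the same argument forbidding $I=\emptyset$), since $\Sigma(A)=\Sigma^*(A)\cup\{0\}$ and $0\in\Sigma(A)\setminus\Sigma^*(A)$ when $0\notin A$.

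Next I would set up the vanishing polynomial. Let $S=\Sigma(A)$, $|S|=m$, and define
\[
P(x_1,\dots,x_n)=\prod_{s\in S}\Bigl(\sum_{i=1}^n x_i - s\Bigr)\cdot Q(x_1,\dots,x_n),
\]
where $Q$ is an auxiliary factor (a product of differences $\prod_{i<j}(x_j-x_i)$ raised to suitable powers, or a Vandermonde-type factor) chosen so that $P$ vanishes on the grid $\{0,1\}^n$ for every legitimate assignment, has total degree exactly $\binom{n+1}{2}=\tfrac{n(n+1)}{2}$ when $m$ is at the threshold, and has a controllable top-degree part. The point of the factor $\prod(\sum x_i - s)$ is that it kills every point of $\{0,1\}^n$ because $\sum \varepsilon_i a_i\in S$; the point of $Q$ is to absorb the remaining degrees of freedom and to make the relevant coefficient computable. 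One then looks at the coefficient of a monomial $x_1^{d_1}\cdots x_n^{d_n}$ with $\sum d_i=\deg P$ and each $d_i\le |\text{grid in coordinate }i|-1$; by the Combinatorial Nullstellensatz, if this coefficient were nonzero, $P$ could not vanish on the whole grid, a contradiction. So the crux is to \emph{choose the monomial and show its coefficient is nonzero}.

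Here is where the new idea enters, and where I expect the main obstacle to lie: rather than computing that coefficient head-on (which, in Theorem~\ref{EB}, required Gessel--Viennot binomial determinants), I would use the coefficient formula, which expresses the coefficient as a sum over \emph{any} Cartesian product $\prod_i C_i$ with $|C_i|=d_i+1$ of values of $P$ divided by the products $\prod_{i}\prod_{c\neq c'\in C_i}(c-c')$. The strategy is to pick the sets $C_i$ to be arithmetic progressions\,--\,specifically $C_i=\{0,1,2,\dots,d_i\}$ or shifted/scaled versions adapted to $A$\,--\,so that for all but exactly one tuple $(c_1,\dots,c_n)\in\prod C_i$ the value $P(c_1,\dots,c_n)$ vanishes (because $\sum c_i$ lands in $S$ or because two coordinates coincide in a factor of $Q$), while for the single surviving tuple $P$ is manifestly nonzero. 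That forces the coefficient to be nonzero, completing the contradiction. The delicate points are: (i) verifying that an arithmetic progression of the right length indeed produces subset sums covering $S$ except in one configuration\,--\,this is exactly where the extremal example ``$A$ is an arithmetic progression'' and the bound $\tfrac{n(n+1)}{2}$ interact, since $\{1,2,\dots,n\}$ has subset sums filling $\{0,1,\dots,\binom{n+1}{2}\}$; (ii) checking that the hypothesis $A\cap(-A)=\emptyset$ is used precisely to ensure the lone surviving term is nonzero and that no spurious cancellation occurs; and (iii) handling the degree bookkeeping so that $\deg P$ matches $\tfrac{n(n+1)}{2}$ and the chosen monomial is admissible. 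I expect (i) together with the precise choice of $Q$ and the progressions $C_i$ to be the real work; once the ``all terms zero but one'' phenomenon is arranged, the conclusion is immediate and requires no coefficient computation at all.
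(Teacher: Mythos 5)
Your high-level strategy is indeed the paper's: argue by contradiction, build a polynomial with a factor $\prod_{x\in C}(\text{linear form}-x)$ times Vandermonde-type factors, and then avoid computing the key coefficient by applying the coefficient formula to a grid modelled on the extremal arithmetic progression, arranged so that all terms but one vanish. But the concrete encoding you propose cannot be completed, and the missing piece is exactly the heart of the proof. If the variables encode membership through the Boolean cube $\{0,1\}^n$ (or $\prod_i\{0,a_i\}$), then every grid has size $2$ in each coordinate, so the Combinatorial Nullstellensatz only ever applies to a monomial with each exponent at most $1$, i.e.\ of total degree at most $n$; your polynomial already has degree $|S|\geq n+1$ from the first factor alone (note $\{0\}\cup A\subset\Sigma(A)$ and $0\notin A$), and multiplying by any auxiliary $Q$ only makes this worse. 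No choice of $Q$, monomial, or coefficient-formula grid can repair this: the grid on which $P$ must vanish is simply too small for the degree. The paper's resolution is a different encoding: since $p$ is odd, write $A=\{2a_1,\dots,2a_d\}$ and let the $d$ variables range over nested subsets of the $2d$ values $\{\pm a_1,\dots,\pm a_d\}$, which are pairwise distinct precisely because $A\cap(-A)=\emptyset$ (this is where the hypothesis is really used, not merely to keep ``the polynomial nonzero''). Adding the constant $m=\sum a_i$ turns a signed sum $\pm a_1\pm\dots\pm a_d+m$ into the subsum of those $2a_i$ carrying a plus sign, and the factors $\prod_{i<j}(X_j-X_i)\prod_{i<j,\,j>\alpha}(X_j+X_i)$ force each $a_i$ to occur with exactly one sign. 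This gives coordinate sets of size up to $2d$, so the degree $\frac{d(d+1)}{2}+\frac{d(d-1)}{2}+\dots$ can actually be matched, and then the coefficient formula on the model $B=2\cdot[1,d]$ with an interval $R$ produces the single surviving term, exactly in the spirit you describe.

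Two smaller points. First, your reduction of the $\Sigma^*$ bound to the $\Sigma$ bound is not quite valid: $0$ may well lie in $\Sigma^*(A)$ (e.g.\ $A=\{1,2,-3\}$ satisfies $A\cap(-A)=\emptyset$), and in the case $\frac{|A|(|A|+1)}{2}\geq p$ the inequality $|\Sigma^*(A)|\geq|\Sigma(A)|-1$ only yields $p-1$, one short of the claimed $\min\{p,\frac{|A|(|A|+1)}{2}\}$; the paper instead proves the $\Sigma^*$ case directly (it is the case $\alpha=1$ of Theorem \ref{Main}, with its own grid in which the sets $A_i$ for $i\leq\alpha$ omit the positive values). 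Second, as written your first factor $\prod_{s\in S}(\sum_i x_i-s)$ does not even vanish on $\{0,1\}^n$; you need the weighted form $\sum_i a_ix_i$, though this slip is immaterial next to the degree obstruction above.
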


Among other the applications of this result are algebraic invariants: Noether number or Davenport constant variations \cite{CZ,OPSS,WS}. Many of these applications would consider the bound on $\Sigma^*(A)$ in order to ensure the existence of a \emph{non trivial} zero-subsum of $A$. For these problems it is also of interest to consider subsums with a larger restriction on the number of terms. This is the aim of the last and new result of this article. We define:

\begin{defi} Let $A\subset \mathbb{F}_p$, we denote $\Sigma_\alpha(A)$ the set of subsums of at least $\alpha$ pairwise distinct elements of $A$ and $\Sigma^\alpha(A)$  the set of subsums of at most $|A|-\alpha$ pairwise distinct elements of $A$.
\begin{align*}
\Sigma_\alpha(A) & = \{a_1+\dots+a_k\mid a_i\in A,\ \alpha\leq k\leq |A|,\ a_i\neq a_j\}  & & =\bigcup_{k=\alpha}^{|A|}(k^\wedge A)\\
\Sigma^\alpha(A) & = \{a_1+\dots+a_k\mid a_i\in A,\ 0\leq k\leq |A|-\alpha,\ a_i\neq a_j\} & & =\bigcup_{k=0}^{|A|-\alpha}(k^\wedge A).
\end{align*}
\end{defi}

These sets of subsums satisfy the following elementary properties:

\begin{itemize}
\item Whenever $\alpha\in\{0,1\}$, one has $\Sigma_0(A)=\Sigma^0(A)=\Sigma(A)$ and $\Sigma_1(A)=\Sigma^*(A)$.
\item Whatever $\alpha$, one has the symmetry: $\Sigma_\alpha(A)=\left(\sum_{a\in A}a\right)-\Sigma^\alpha(A)$, what implies that $|\Sigma_\alpha(A)|=|\Sigma^\alpha(A)|$.
\item Whenever $\alpha\leq\alpha'$ one has $\Sigma_{\alpha'}(A)\subset\Sigma_\alpha(A)$.
\end{itemize}

The generalization of Theorem \ref{EB} is:

\begin{theo}\label{Main} Let $A\subset\mathbb{F}_p$, such that $A\cap(-A)=\emptyset$. For any natural integer $\alpha\in[0,|A|]$, we have:
\[|\Sigma_\alpha(A)|=|\Sigma^\alpha(A)|\geqslant\min\left\{p,\frac{|A|(|A|+1)}{2}-\frac{\alpha(\alpha+1)}{2}+1\right\}.\]
\end{theo}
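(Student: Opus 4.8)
The plan is a proof by contradiction via the Combinatorial Nullstellensatz, used in the ``reverse'' fashion announced in the abstract. Put $n=|A|$ and $m=\tfrac{n(n+1)}{2}-\tfrac{\alpha(\alpha+1)}{2}$. One may assume $\alpha\le n$, and by the symmetry $|\Sigma_\alpha(A)|=|\Sigma^\alpha(A)|$ recorded above it is convenient to argue with subsums of \emph{at most} $\beta:=n-\alpha$ pairwise distinct elements. After a short preliminary step one may also assume $m<p$: when $m\ge p$ the asserted bound is merely $p$, obtained by a routine reduction to subsets of $A$ (using $\Sigma_\alpha(A'')\cup\bigl(a+\Sigma_{\alpha-1}(A'')\bigr)\subseteq\Sigma_\alpha(A)$ for $a\in A\setminus A''$ together with Cauchy--Davenport). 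So assume, for contradiction, that $|\Sigma_\alpha(A)|<\min\{p,m+1\}$ and fix $E\subseteq\mathbb{F}_p$ with $\Sigma_\alpha(A)\subseteq E$ and $|E|=\min\{m,p-1\}$; the goal is a contradiction.

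I would then work with a polynomial of the form
\[
P(x_1,\dots,x_N)=\prod_{e\in E}\bigl(L(x)-e\bigr)\cdot Q(x),
\]
where $L(x)=x_1+\dots+x_N$ and $Q$ is a fixed polynomial assembled from factors $x_i-x_j$ together with factors of the form $x_i+x_j$. The latter are exactly where $A\cap(-A)=\emptyset$ enters: it forces the $2n$ elements of $A\cup(-A)$ to be pairwise distinct and a factor $x_i+x_j$ to stay non-zero at the relevant points. The number $N$ of variables and the precise factors of $Q$ are to be chosen so that, for a suitable exponent vector $t=(t_1,\dots,t_N)$ with $\sum_i t_i=\deg P=|E|+\deg Q$, there is a Cartesian product $S_1\times\dots\times S_N$ attached to $A$, with $|S_i|=t_i+1$, on which $P$ vanishes identically: the factor $Q$ kills every point carrying a ``forbidden coincidence'', while at the surviving points the coordinates encode an admissible configuration of at most $\beta$ distinct elements of $A$, so that $L(x)\in\Sigma_\alpha(A)\subseteq E$ and the first product vanishes. (For the extremal arithmetic progression $A_0=\{1,\dots,n\}$ one checks directly that $\Sigma_\alpha(A_0)$ is the interval of integers from $\tfrac{\alpha(\alpha+1)}{2}$ to $\tfrac{n(n+1)}{2}$, of length exactly $m+1$, so the theorem is tight for $A_0$ and the construction is ``critical''.)

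The heart of the argument is then to prove $[x^t]P\ne 0$ \emph{without computing} the corresponding binomial determinant. The key remark is that this coefficient coincides with the coefficient of $x^t$ in $L^{|E|}Q$, hence depends only on $|E|=\min\{m,p-1\}$ and on the universal polynomial $Q$, not on $E$ itself. One may therefore evaluate it through the coefficient formula over \emph{any} Cartesian product $T_1\times\dots\times T_N$ with $|T_i|=t_i+1$, and in particular over the product attached to $A_0=\{1,\dots,n\}$ --- which is legitimate because the hypothesis forces $n\le(p-1)/2$, so $A_0\cap(-A_0)=\emptyset$. Applying the coefficient formula with $E$ replaced by $\Sigma_\alpha(A_0)$ deprived of its extreme element $s^{*}$ --- a set of the same cardinality $m$ --- every point $c$ of $T_1\times\dots\times T_N$ satisfies $Q(c)=0$ or $L(c)\in\Sigma_\alpha(A_0)\setminus\{s^{*}\}$, the sole exception being the ``corner'' point realizing $s^{*}$, where $Q$ does not vanish. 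Thus the coefficient formula is a sum all of whose terms vanish but one, and that remaining term is an explicit product of non-zero elements of $\mathbb{F}_p$ (non-zero since $m<p$). Hence $[x^t]P\ne 0$; since $|S_i|=t_i+1>t_i$, the Combinatorial Nullstellensatz furnishes a point of $S_1\times\dots\times S_N$ at which $P\ne 0$, contradicting the construction of the previous step.

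I expect the genuine difficulty to be the simultaneous construction: choosing $Q$, the exponent vector $t$ and the two Cartesian products so that (i) $P$ vanishes on the product built from $A$, (ii) the product built from $A_0$ collapses the coefficient formula to a single non-vanishing term, and (iii) the degree identity $\sum_i t_i=|E|+\deg Q$ and the size constraints $|S_i|=|T_i|=t_i+1$ all hold at once. A notable subtlety is that $t$ cannot be the flat vector: for $t=(c,\dots,c)$ the relevant coefficient vanishes through sign cancellation over the symmetric group, so the required asymmetry must be supplied by $Q$ (or by using chains and progressions of unequal lengths), and one must pin down exactly where $A\cap(-A)=\emptyset$ is needed to keep $Q$ non-zero at the surviving corner. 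Once this is settled, the boundary case $m\ge p$ and the check that $\alpha\in\{0,1\}$ recovers Theorem~\ref{EB} are routine.
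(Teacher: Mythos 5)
Your plan reproduces the paper's overall strategy (Combinatorial Nullstellensatz by contradiction, compare with the extremal progression, collapse the coefficient formula to a single surviving term), but it has two genuine gaps. First, the ``short preliminary step'' reducing to $m<p$ does not work. Cauchy--Davenport applies to sumsets, not unions, so the usable form of your inclusion is $\{0,a\}+\Sigma_\alpha(A'')\subseteq\Sigma_\alpha(A)$, which regains only $+1$ per reinstated element; but deleting one element of $A$ lowers the target $\frac{|A|(|A|+1)}{2}-\frac{\alpha(\alpha+1)}{2}$ by $|A|$. Starting from a maximal subset $A''$ with $m''<p$ and adding back $k$ elements you only reach roughly $m''+k+1$, which can be far below $p$; so the case $m\geq p$ (i.e.\ the assertion $\Sigma_\alpha(A)=\mathbb{F}_p$, the one needed in the zero-sum applications) is not recovered. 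The paper does not reduce this case away: it absorbs the overshoot into the construction via $\delta=\max\{0,m-(p-1)\}$, uses $\Sigma^{\alpha+1}(A)\subset\Sigma^\alpha(A)$ only to ensure $\delta\leq\alpha$, and then shortens $\delta$ of the Cartesian factors and the interval $R$ accordingly. Your proposal has no substitute for this mechanism.

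Second, even assuming $m<p$, what you call ``the genuine difficulty'' --- the simultaneous choice of $Q$, the number of variables, the exponent vector and the two Cartesian products --- is precisely the content of the proof, and it is left unspecified. The paper's choices are not generic: $p$ is odd, one writes $A=\{2a_1,\dots,2a_d\}$ and shifts by $m=\sum a_i$ so that points of the product encode signed sums $\pm a_1\pm\dots\pm a_d$; the auxiliary factor is $\prod_{1\le i<j\le d}(X_j-X_i)\prod_{i<j,\ j>\alpha}(X_j+X_i)$, and the sets $A_i$ are nested initial segments of $\{-a_d,\dots,-a_1,a_1,\dots\}$ with staggered lengths, so that any point where the difference/sum factors survive has at least $\alpha$ minus signs and hence lands in $\Sigma^\alpha(A)\subseteq C$; and for the model $B=2\cdot[1,d]$ one must actually verify that exactly one point of $B_1\times\dots\times B_d$ has pairwise non-equal, non-opposite coordinates realizing the unique missing value of the truncated interval $R$. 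None of this is routine bookkeeping (your own remark about the flat exponent vector shows the asymmetry must be engineered), so as it stands the proposal is a plausible plan in the spirit of the paper, not a proof of the theorem.
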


Before the proof, we can make the following remarks:

\begin{itemize}
\item Whenever $\alpha\in\{0,1\}$, this is exactly Theorem \ref{EB}.
\item This bound is sharp since for $A=[1,d]$, one has:
\[\Sigma^\alpha(A)=\left[0,\frac{d(d+1)}{2}-\frac{\alpha(\alpha+1)}{2}\right].\]
of cardinality exactly $\min\left\{p,\frac{|A|(|A|+1)}{2}-\frac{\alpha(\alpha+1)}{2}+1\right\}$.
\end{itemize}

The article is organized as follows: In a first section, we explain the method. We state the Combinatorial Nullstellensatz, the coefficient formula and the new proofs of the Cauchy-Davenport and Dias da Silva-Hamidoune theorems. The novelty in these proofs, is that there would be no need to compute the coefficients. In a second section, the proof of Theorem \ref{Main} is given. It follows the steps of the method of the first section. In the last section, we discuss the problem of the sets of subsums with upper and lower bound on the number of terms. It appears surprisingly that the problem with a double bound is of a different nature than the three previous ones.

\section{Rewriting the polynomial proofs of Cauchy-Davenport and Dias da Silva-Hamidoune theorems}\label{PM}

\subsection{The polynomial method}

The Combinatorial Nullstellensatz is a result that generalizes to multivariate polynomials the fact that an univariate polynomial of degree $d$ cannot vanish on $d+1$ points.

\begin{theo}[Combinatorial Nullstellensatz \cite{Al}]\label{CNSS} Let $\mathbb{F}$ be any field and $P(\underline{X})\in\mathbb{F}[X_1,\dots,X_d]$. If $P$ has total degree $k_1+\dots+k_d$
and its coefficient of the monomial $\prod_{i=1}^dX_i^{k_i}$ is non-zero, then whatever is the family $(A_1,\dots,A_d)$ of subsets of $\mathbb{F}$ satisfying $|A_i|>k_i$, there is a point $\underline{a}\in A_1\times\dots\times A_d$ such that
\[P(\underline{a})\neq 0.\]
\end{theo}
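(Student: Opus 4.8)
The plan is to prove the statement by contradiction, reducing the hypothesis on a single coefficient to an ideal-membership statement about polynomials that vanish on a grid. First I would shrink each $A_i$ to a subset of size exactly $k_i+1$, which is permitted since $|A_i|>k_i$; it clearly suffices to produce a non-vanishing point inside the smaller grid. Writing $\underline{a}=(a_1,\dots,a_d)$, I would then suppose for contradiction that $P$ vanishes at every point of $A_1\times\dots\times A_d$, and introduce the monic univariate polynomials $g_i(X_i)=\prod_{a\in A_i}(X_i-a)$ of degree $k_i+1$, each vanishing on $A_i$.

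The core of the argument is a degree-controlled representation lemma: under the vanishing assumption, $P$ can be written $P=\sum_{i=1}^d h_i g_i$ with $\deg(h_i g_i)\le\deg P$ for every $i$. To establish it I would run the multivariate reduction process: whenever a monomial of the current polynomial has $X_i$-exponent at least $k_i+1$, I replace the factor $X_i^{k_i+1}$ using the relation $X_i^{k_i+1}=X_i^{k_i+1}-g_i(X_i)$, which expresses it as a combination of strictly lower powers of $X_i$. Because $g_i$ is \emph{monic} of degree $k_i+1$, each such substitution lowers the $X_i$-degree while never raising the total degree, so the process terminates, records the subtracted multiples as the $h_i$, and leaves a remainder $\bar P=P-\sum_i h_i g_i$ with $\deg_{X_i}\bar P\le k_i$ for all $i$ and each $\deg(h_i g_i)\le\deg P$.

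Next I would kill the remainder. Since each $g_i$ vanishes on $A_i$, the polynomials $\bar P$ and $P$ agree on the whole grid, so $\bar P$ vanishes on $A_1\times\dots\times A_d$. The point is then that a polynomial vanishing on the full grid while having $X_i$-degree strictly less than $|A_i|$ in each variable must be identically zero; I would prove this by induction on $d$, the base case being the fundamental fact that a nonzero univariate polynomial of degree $<|A_1|$ cannot vanish on the $|A_1|$ points of $A_1$, and the inductive step expanding $\bar P$ in powers of $X_d$ and applying the hypothesis coordinatewise. Hence $\bar P=0$ and $P=\sum_i h_i g_i$.

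Finally I would extract the contradiction from the top coefficient. The monomial $m=\prod_i X_i^{k_i}$ has total degree $\sum_i k_i=\deg P$, so in $\sum_i h_i g_i$ it can only be fed by terms of maximal total degree. In $h_i g_i$ any monomial of degree $\deg P$ must come from the leading term $X_i^{k_i+1}$ of $g_i$ times a top-degree monomial of $h_i$ (the lower-degree part of $g_i$ cannot reach degree $\deg P$, since $\deg h_i\le\deg P-k_i-1$); thus every degree-$\deg P$ monomial of $h_i g_i$ is divisible by $X_i^{k_i+1}$, whereas $m$ carries $X_i$ only to the power $k_i$. Therefore the coefficient of $m$ in each $h_i g_i$, and hence in $P$, is zero, contradicting the hypothesis. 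The hard part is the degree-controlled representation lemma: the whole argument rests on keeping $\deg h_i\le\deg P-\deg g_i$ through the reduction, which is exactly what monicity of the $g_i$ guarantees. Alternatively, one could bypass the representation lemma and finish in one line via the multivariate Lagrange interpolation formula — the coefficient formula invoked later in the paper — which writes the coefficient of $m$ as a nonzero-weighted sum of the values $P(\underline{a})$ over the grid, forcing some $P(\underline{a})\neq 0$.
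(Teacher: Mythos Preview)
The paper does not give its own proof of the Combinatorial Nullstellensatz; Theorem~\ref{CNSS} is stated as a cited result of Alon and used thereafter as a black box, so there is no in-paper argument to compare against.

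That said, your proof is correct and is essentially Alon's original argument from~\cite{Al}: shrink each $A_i$ to size $k_i+1$, reduce $P$ modulo the monic grid polynomials $g_i(X_i)=\prod_{a\in A_i}(X_i-a)$ without raising total degree, use the induction-on-$d$ lemma that a polynomial of $X_i$-degree at most $k_i$ vanishing on the full grid is identically zero to kill the remainder, and then observe that every top-degree monomial of $h_ig_i$ carries $X_i^{k_i+1}$ and so cannot contribute to $\prod_j X_j^{k_j}$. Each step is sound, including the degree bookkeeping you flag as the delicate point.

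The alternative one-line route you mention at the end---deriving the non-vanishing directly from the coefficient formula (Theorem~6 in the paper), which expresses the coefficient of $\prod_i X_i^{k_i}$ as a nonzero-weighted sum of the grid values---is exactly the equivalence the paper emphasizes, and is in fact closer in spirit to how the Nullstellensatz is \emph{used} throughout the paper than the ideal-membership proof you give in full.
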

This theorem has lead to numerous proofs of combinatorial conjectures and new proofs in many mathematical fields. It is called Combinatorial Nullstellensatz because another formulation of it gives a generating family of the ideal of polynomial that vanishes on a cartesian product. The previously stated formulation is a criterion for a polynomial not to belong to this ideal.

Applying the polynomial method (the one that relies on the Combinatorial Nullstellensatz) on a combinatorial problem consists in defining a (big enough) cartesian product and a polynomial of small degree, so that the Combinatorial Nullstellensatz, will assert that there is a solution or a contradiction provided that a specific coefficient is nonzero. The combinatorial problem is then reduced to the computation problem of the appropriate coefficient.

In the three problems treated in this article, we will not need to compute the coefficient. We use the coefficient formula proved independently by Karasev-Petrov and by L\'ason, it is equivalent to the Combinatorial Nullstellensatz:
\begin{theo}(Coefficient formula \cite{KP,Las}) Let $P\in\mathbb{F}[X_1,\dots,X_d]$ be a polynomial of degree $k_1+\dots+k_d$ and any family of sets $A_i$, with $|A_i|=k_i+1$, denoting $g_i(X)=\prod_{a\in A_i}(X-a)$, then the coefficient of the monomial $\prod_{i=1}^dX_i^{k_i}$ in the expansion of $P$ is
\[\sum_{\underline{b}\in\prod_{i=1}^d A_i}\frac{P(\underline{b})}{\prod_{i=1}^dg_i'(b_i)}.\]
\end{theo}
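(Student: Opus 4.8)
The plan is to prove the formula by reducing to a single monomial via linearity, and then to exploit the fact that over a Cartesian product the right-hand side factorizes into a product of one-variable sums, each of which is controlled by Lagrange interpolation. Both the coefficient of $\prod_{i=1}^d X_i^{k_i}$ in $P$ and the functional $P\mapsto\sum_{\underline{b}}P(\underline{b})/\prod_i g_i'(b_i)$ are $\mathbb{F}$-linear in $P$, so it suffices to verify the identity when $P$ is a single monomial $\prod_{i=1}^d X_i^{m_i}$. Since $P$ has total degree $k_1+\dots+k_d$, every such monomial satisfies $\sum_i m_i\leq\sum_i k_i$. For a monomial the value $P(\underline{b})=\prod_i b_i^{m_i}$ splits over the coordinates, and because the summation ranges over $\prod_i A_i$ the sum factorizes:
\[\sum_{\underline{b}\in\prod_i A_i}\frac{\prod_i b_i^{m_i}}{\prod_i g_i'(b_i)}=\prod_{i=1}^d\left(\sum_{b\in A_i}\frac{b^{m_i}}{g_i'(b)}\right).\]

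First I would establish the one-variable identity: for a set $A$ with $|A|=k+1$ and $g(X)=\prod_{a\in A}(X-a)$,
\[\sum_{b\in A}\frac{b^m}{g'(b)}=\begin{cases}0 & \text{if } 0\leq m<k,\\ 1 & \text{if } m=k.\end{cases}\]
This is exactly Lagrange interpolation: for $m\leq k$ one has $\deg(X^m)\leq k$, so interpolating $X^m$ on the $k+1$ points of $A$ gives $X^m=\sum_{b\in A}\frac{b^m}{g'(b)}\,\frac{g(X)}{X-b}$, where each $g(X)/(X-b)=\prod_{a\neq b}(X-a)$ is monic of degree $k$; comparing the coefficients of $X^k$ on both sides yields the claim. (Equivalently one reads it off the partial-fraction expansion of $X^m/g(X)$ by letting $X\to\infty$.) Note that $g'(b)=\prod_{a\in A,\,a\neq b}(b-a)\neq 0$ for $b\in A$, so every denominator is invertible.

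Finally I would combine these. Given a monomial with $\sum_i m_i\leq\sum_i k_i$, two cases arise. If some $m_i<k_i$, the corresponding factor $\sum_{b\in A_i}b^{m_i}/g_i'(b)$ vanishes, hence the whole product is $0$; this matches the coefficient of $\prod_j X_j^{k_j}$ in $\prod_j X_j^{m_j}$, which is $0$ since $m_i\neq k_i$. Otherwise $m_i\geq k_i$ for every $i$, so $\sum_i m_i\geq\sum_i k_i$, and together with the degree hypothesis this forces $m_i=k_i$ for all $i$; then each factor equals $1$, the product is $1$, and the coefficient of $\prod_j X_j^{k_j}$ in itself is indeed $1$. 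This establishes the identity for every monomial, and hence for $P$ by linearity.

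The main point to watch---and the only place the hypothesis $\deg P=k_1+\dots+k_d$ enters---is the second case: for a monomial with some $m_i>k_i$ the one-variable factor $\sum_b b^{m_i}/g_i'(b)$ need not vanish (it is the complete homogeneous symmetric function $h_{m_i-k_i}(A_i)$), so without the constraint $\sum_i m_i\leq\sum_i k_i$ the right-hand side could be nonzero while contributing nothing to the top coefficient. The total-degree bound is precisely what prevents a deficit $m_i<k_i$ in one variable from being compensated by an excess $m_j>k_j$ in another, so that $\prod_i X_i^{k_i}$ is the only monomial surviving on both sides.
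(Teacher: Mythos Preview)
Your argument is correct. The reduction to monomials by linearity, the factorization over the Cartesian product, the one-variable Lagrange interpolation identity
\[
\sum_{b\in A}\frac{b^{m}}{g'(b)}=[m=k]\qquad(0\le m\le k,\ |A|=k+1),
\]
and the final case split using the total-degree hypothesis are all sound, and your closing remark about why the degree bound is essential is exactly right.

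There is nothing in the paper to compare against: the coefficient formula is merely \emph{stated} here with references \cite{KP,Las} and is used as a tool, not proved. For what it is worth, your proof is essentially the one given by Laso\'n in \cite{Las}: linearity, separation of variables, and the univariate Lagrange identity. Karasev and Petrov \cite{KP} instead reduce modulo the ideal generated by the $g_i(X_i)$ and read the formula off the remainder, which is a slightly different packaging of the same interpolation content; your approach is the more elementary of the two and requires no preliminary algebraic setup.
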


In \cite{KP}, Karasev and Petrov gave a new proof of Dyson's conjecture thanks to this formula using an auxiliary polynomial and cartesian product.

We will use the coefficient formula for some well chosen sets to prove that the wanted coefficient is not zero. This does not require to compute the coefficient. The coefficient formula will provide
an expression of the specified coefficient as a sum, all of whose terms are zero but exactly one.

In our context the bound is tight and reached by some arithmetical progressions. The way to choose the auxiliary polynomial and cartesian product will be to consider the same constructions for these arithmetical progressions. In conclusion, our method is a way to understand why these bounds are reached by these arithmetical progressions via a kind of algebraic comparison.

\subsection{A proof of the Cauchy-Davenport theorem}

\begin{proof}
Let us consider two non empty subsets $A$ and $B$ of $\mathbb{F}_p$, of respective cardinality, $|A|=n$ and $|B|=m$. Define $\delta=\max\{0,n+m-1-p\}$. Since $\max\{n,m\}\leq p$, one has $\delta<\min\{n,m\}$.

We will prove the theorem by contradiction. Let us suppose that $|A+B|<\min\{p,n+m-1\}$, then consider a set $C$ of cardinality $|C|=\min\{p-1,n+m-2\}=n+(m-\delta)-2<p$ that contains $A+B$.

Define the polynomial
\[P(X,Y)=\prod_{x\in C}(X+Y-x).\]

By definition, $P$ vanishes on the cartesian product $A\times B$. We have $\deg(P)=|C|=(n-1)+(m-\delta-1)$.

Using the Combinatorial Nullstellensatz, to obtain a contradiction, it suffices to prove that the coefficient $c_{n-1,(m-\delta)-1}$ of $X^{n-1}Y^{(m-\delta)-1}$ is not zero.

Now consider the sets $A'=[1,n]$ and $B'=[1,(m-\delta)]$, one has $A'+B'=[2,n+(m-\delta)]$. We also consider the polynomial
\[Q(X,Y)=\prod_{x=2}^{n+(m-\delta)-1}(X+Y-x).\]
The polynomial $Q(X,Y)$ is defined similarly as $P(X,Y)$ on a set $C'=[2,n+(m-\delta)-1]$ of cardinality $|C'|=n+(m-\delta)-2=|C|$. Since $|C'|<p$, the elements of $[2,n+(m-\delta)]$ are pairwise distinct modulo $p$.
The two polynomial $P$ and $Q$ have the same coefficients of maximal degree, in particular they have the same coefficient $c_{n-1,(m-\delta)-1}$ of the monomial $X^{n-1}Y^{(m-\delta)-1}$.

We can use the coefficient formula on the sets $A'$ and $B'$ to find this coefficient in $Q$. The key point of this proof is the fact that the polynomial $Q$ vanishes on all the element of $A'\times B'$ but one: $Q(n,(m-\delta))\neq 0$. Therefore the coefficient is
\begin{align*}
c_{n-1,(m-\delta)-1}= & \sum_{(a,b)\in A'\times B'}\frac{Q(a,b)}{\prod_{a'\in A'\setminus\{a\}}(a-a')\prod_{b'\in B'\setminus\{b\}}(b-b')}\\
= & \frac{Q(n,(m-\delta))}{\prod_{i=1}^{n-1}(n-i)\prod_{i=1}^{(m-\delta)-1}((m-\delta)-i)}\neq 0.
\end{align*}

The expression as a sum that contains exactly one non-zero term suffices to assert that it is non zero.
\end{proof}

In this case, the computation is easy and the previous formula also proves that $c_{n-1,(m-\delta)-1}=\binom{n+(m-\delta)-2}{n-1}$.

\subsection{A proof of the Dias da Siva-Hamidoune theorem}

\begin{proof}
Consider a subset $A=\{a_1,\dots,a_d\}$ of $\mathbb{F}_p$ and $h\in[0,d]$.

We will prove the theorem by contradiction. Suppose that $h^\wedge A\subset C$, with $|C|=\min\{p-1,h(d-h)\}$.

Let us denote $\delta=\max\{0,h(d-h)+1-p\}$, this implies that $|C|=h(d-h)-\delta<p$. Since $h^\wedge(A\setminus\{a_d\})\subset h^\wedge A$, one can consider that $h((d-1)-h)+1<p$, what implies that $\delta<h$.

Let us consider the polynomial of $P_{d,h,\delta}(\underline{X})\in\mathbb{F}_p[X_1,\dots,X_h]$:
\[P_{d,h,\delta}(\underline{X})=\prod_{x\in C}(X_1+X_2+\dots+X_h-x)\prod_{1\leq i<j\leq h}(X_j-X_i).\]
By definition of $C$, $P_{d,h,\delta}$ vanishes on the whole cartesian product $A^h$. In our context, we will consider the sub-cartesian product $A_1\times\dots\times A_h$, where:
\[\begin{array}{lcl}
A_1            & = & \{a_1,\dots,a_{d-h}\} \\
\ \vdots       & &   \ \ \ \vdots\hspace{1.8cm} \ddots\\
A_\delta       & = & \{a_1,\hspace{.6cm}\dots\hspace{.6cm},a_{d-h+\delta-1}\} \\
A_{\delta+1}   & = & \{a_1,\hspace{1cm}\dots\hspace{1cm},a_{d-h+\delta+1}\} \\
\ \vdots       & &   \ \ \ \vdots\hspace{3.7cm} \ddots \\
A_h            & = & \{a_1,\hspace{1.8cm}\dots\hspace{1.8cm},a_d\} \\
\end{array}\]

On the first hand, one has
\begin{align*}
\deg(P)= & |C|+\frac{h(h-1)}{2}\\
 = & h(d-h)+\frac{h(h-1)}{2}-\delta\\
 = & dh-\frac{h(h+1)}{2}-\delta,
\end{align*}
and on the other hand $\sum_{i=1}^h(|A_i|-1)=dh-\frac{h(h+1)}{2}-\delta$.

Thanks to the Combinatorial Nullstellensatz, to obtain a contradiction, it suffices to prove that the coefficient $c_{d,h,\delta}$ of the monomial $\prod_{i=1}^hX_i^{|A_i|-1}=\prod_{i=1}^\delta X_i^{d-h+i-2}\prod_{i=\delta+1}^h X_i^{d-h+i-1}$ is not zero.

We now consider the same construction for the set $B=[1,d]$ that satisfy $h^\wedge B=\left[\frac{h(h+1)}{2},\frac{d(d+1)}{2}-\frac{(d-h)(d-h+1)}{2}\right]$ of cardinality $|h^\wedge B|=\min\{p,h(d-h)+1\}$.

Let us consider the cartesian product $B_1\times\dots\times B_h$:
\[\begin{array}{lcl}
B_1            & = & \{1,\dots,(d-h)\} \\
\ \vdots       & &  \ \ \vdots\hspace{1.8cm} \ddots\\
B_\delta       & = & \{1,\hspace{.6cm}\dots\hspace{.6cm},(d-h+\delta-1)\} \\
B_{\delta+1}   & = & \{1,\hspace{1cm}\dots\hspace{1cm},(d-h+\delta+1)\} \\
\ \vdots       & &  \ \ \vdots\hspace{3.7cm} \ddots \\
B_h            & = & \{1,\hspace{2cm}\dots\hspace{2cm},d\}.
\end{array}\]

We also define the set $R=\left[\frac{h(h+1)}{2},\frac{d(d+1)}{2}-\frac{(d-h)(d-h+1)}{2}-\delta-1\right]$. (Since $h(d-h)-\delta<p$, the elements of $R$ are pairwise distinct modulo $p$ and do not cover $\mathbb{F}_p$.) Finally, we define the polynomial
\[Q_{d,h,\delta}(\underline{X})=\prod_{x\in R}(X_1+X_2+\dots+X_h-x)\prod_{1\leq i<j\leq h}(X_j-X_i).\]
Since $|R|=h(d-h)-\delta=|C|$, the two polynomials $Q_{d,h,\delta}$ and $P_{d,h,\delta}$ have same degree.  Moreover they differ only by constants in their linear factors, so they have the same coefficients of maximal degree. In particular, they share the have the same coefficient
$c_{d,h,\delta}$ of the monomial $\prod_{i=1}^\delta X_i^{d-h+i-2}\prod_{i=\delta+1}^h X_i^{d-h+i-1}$.

If we consider the sums $b_1+\dots+b_h$ of pairwise different values $b_i\in B_i$, one can reach any value in $\left[\frac{h(h+1)}{2},\frac{d(d+1)}{2}-\frac{(d-h)(d-h+1)}{2}-\delta\right]$. Only one of the values is missing in $R$, namely $\frac{d(d+1)}{2}-\frac{(d-h)(d-h+1)}{2}-\delta$ and this value is uniquely reached by the sum $(d-h)+\dots+(d-h+\delta-1)+(d-h+\delta+1)+\dots+d$. This implies that there is only one point $\underline{b}^*$ in the cartesian product $B_1\times\dots\times B_h$ such that $Q_{d,h,\delta}(\underline{b}^*)\neq 0$. Using the coefficient formula, one get that:

\[c_{d,h,\delta}=\sum_{\underline{b}\in \prod B_i}\frac{Q_{d,h,\delta}(\underline{b})}{\prod g'_i(b_i)}= \frac{Q_{d,h,\delta}(\underline{b}^*)}{\prod_{i=1}^dg_i'(b^*_i)} \neq 0,\]
where \[\underline{b}^*=(\underbrace{(d-h),\dots,(d-h+\delta-1)}_{i=1..\delta},\underbrace{(d-h-\delta+1),\dots,d}_{i=\delta+1..h}).\]

This coefficient is therefore different from zero and the proof is complete.
\end{proof}

\begin{rem}
The computation of the coefficient $c_{d,h,\delta}$ can be proceed to a closed expression, it is done in proposition \ref{comput1} in the annex of this article.
\end{rem}

\section{Sets of subset sums whose number of terms is bounded}

We proceed now to the proof of theorem \ref{Main}:

\begin{proof}

Whenever $p=2$, the hypothesis $A\cap(-A)=\emptyset$ is impossible for a non-empty subset, so from now on $p$ is an odd prime. Consider that the set is $A=\{2a_1,2a_2,\dots,2a_d\}$, so $|A|=d$ and denote $m=\sum_{i=1}^da_i$.

We prove the theorem by contradiction. Suppose that $|\Sigma^\alpha(A)|<\min\left\{p,\frac{d(d+1)}{2}-\frac{\alpha(\alpha+1)}{2}+1\right\}$, and consider a set $C$, such that $\Sigma^\alpha(A)\subset C$, with $|C|=\min\left\{p-1,\frac{d(d+1)}{2}-\frac{\alpha(\alpha+1)}{2}\right\}$.

Denote:
\[\delta=\max\left\{0,\frac{d(d+1)}{2}-\frac{\alpha(\alpha+1)}{2}-(p-1)\right\}.\]
So that $|C|=\frac{d(d+1)}{2}-\frac{\alpha(\alpha+1)}{2}-\delta<p$.

Since one has $\Sigma^{\alpha+1}(A)\subset\Sigma^\alpha(A)$. One can consider that $\frac{d(d+1)}{2}-\frac{(\alpha+1)(\alpha+2)}{2}+1<p$. This implies that $\delta\leq\alpha$.

\bigskip

We define the polynomial:
\begin{align*}
P_{d,\alpha,\delta}(\underline{X})= & \prod_{x\in C}(X_1+\dots+X_d+m-x)\prod_{1\leq i<j\leq d}(X_j-X_i)\prod_{\substack{1\leq i<j\leq d\\\textrm{and}\ j>\alpha}}(X_j+X_i)\\
\end{align*}
This polynomial has degree
\begin{align*}
\deg(P_{d,\alpha,\delta})= & \left(\frac{d(d+1)}{2}-\frac{\alpha(\alpha+1)}{2}-\delta\right)+\left(\frac{d(d-1)}{2}\right)
+\left(\underbrace{(d-\alpha)\alpha}_{j>\alpha,\ \textrm{and}\ i\leq\alpha}+\underbrace{\frac{(d-\alpha)(d-\alpha-1)}{2}}_{\alpha<i<j}\right)\\
= & d^2+\frac{d(d-1)}{2}-\alpha^2-\delta.
\end{align*}

Let us consider the sets:
\[\begin{array}{lcll}
A_1            & = & \{-a_d,\dots,-a_{\alpha+1}\} & \\
\ \vdots       & &   \ \ \ \vdots\hspace{1.8cm} \ddots & \\
A_\delta       & = & \{-a_d,\hspace{.3cm}\dots\hspace{.3cm},-a_{\alpha-\delta+2}\} & \\
A_{\delta+1}   & = & \{-a_d,\hspace{1cm}\dots\hspace{1cm},-a_{\alpha-\delta}\} & \\
\ \vdots       & &   \ \ \ \vdots\hspace{3.7cm} \ddots & \\
A_\alpha        & = & \{-a_d,\hspace{1.5cm}\dots\hspace{1.5cm},-a_1\} & \\
A_{\alpha+1}    & = & \{ -a_d,\hspace{1.5cm}\dots\hspace{1.5cm},-a_1, & a_1,\hspace{.5cm}\dots\hspace{.5cm}, a_{\alpha+1}\}\\
\ \vdots       & &  \ \ \ \vdots\hspace{4.2cm} \vdots &\ \vdots \hspace{3.2cm}\ddots\\
A_d   & = & \{ -a_d,\hspace{1.5cm}\dots\hspace{1.5cm},-a_1, & a_1,\hspace{1.5cm}\dots\hspace{1.5cm}, a_d\}\\
\end{array}\]
Moreover, one also have:
\begin{align*}
\sum_{i=1}^d(|A_i|-1)= & \left(d^2-\frac{\alpha(\alpha-1)}{2}\right)+\left(d(d-\alpha)-\frac{(d-\alpha)(d-\alpha-1)}{2}\right)-d-\delta\\
= & d(2d-\alpha-1)-\left(\alpha^2+\frac{d(d-1)}{2}-d\alpha\right)-\delta\\
= & d^2+\frac{d(d-1)}{2}-\alpha^2-\delta.
\end{align*}

Whatever is the element of the cartesian product, if the two last factors of $P_{d,\alpha,\delta}$ do not vanish then it consists of a sum of the type $\pm a_1\pm a_2\dots\pm a_d$, which has at least $\alpha$ negative signs. So $\pm a_1\pm a_2\dots\pm a_d+m$ is a sum of at most $d-\alpha$ elements of $A$ and the first factor vanishes. In conclusion, $P_{d,\alpha,\delta}$ vanishes on the whole cartesian product $\prod_{i=1}^d A_i$.

To obtain a contradiction thanks to the Combinatorial Nullstellensatz, it suffices to prove that the coefficient $c_{d,\alpha,\delta}$ of the following monomial is non zero:
\[\prod_{i=1}^dX_i^{|A_i|-1}=\left(\prod_{i=1}^\delta X_i^{d-\alpha+i-2}\right)\left(\prod_{i=\delta+1}^\alpha X_i^{d-\alpha+i-1}\right)  \left(\prod_{i=\alpha+1}^dX_i^{d+i-1}\right).\]

Let us now consider the same construction for the set $B=2.[1,d]$: one has
\[\Sigma^\alpha(B)=2.\left[0,\frac{d(d+1)}{2}-\frac{\alpha(\alpha+1)}{2}\right],\] of cardinality $|\Sigma^\alpha(B)|=\frac{d(d+1)}{2}-\frac{\alpha(\alpha+1)}{2}+1$.
Define the sets:
\[\begin{array}{lcll}
B_1         & = & \{-d,\dots,-(\alpha+1)\} & \\
\ \vdots    & &   \ \ \ \vdots\hspace{1.8cm} \ddots & \\
B_\delta     & = & \{-d,\hspace{.3cm}\dots\hspace{.3cm},-(\alpha-\delta+2)\} & \\
B_{\delta+1} & = & \{-d,\hspace{1cm}\dots\hspace{1cm},-(\alpha-\delta)\} & \\
\ \vdots  & &   \ \ \ \vdots\hspace{3.7cm} \ddots & \\
B_\alpha    & = & \{-d,\hspace{1.5cm}\dots\hspace{1.5cm},-1\} & \\
B_{\alpha+1} & = & \{-d,\hspace{1.5cm}\dots\hspace{1.5cm},-1 & 1,\hspace{.5cm}\dots\hspace{.5cm},\alpha+1\}\\
\ \vdots  & & \ \ \ \vdots\hspace{4.1cm} \vdots  &  \vdots \hspace{2.5cm} \ddots \\
B_d & = & \{-d,\hspace{1.5cm}\dots\hspace{1.5cm},-1 & 1,\hspace{1cm}\dots\hspace{1.3cm}, d\}\\
\end{array}\]

Let us denote $m'=\frac{d(d+1)}{2}$ and $R=\left[0,\frac{d(d+1)}{2}-\frac{\alpha(\alpha+1)}{2}-\delta-1\right]$. Since $\frac{d(d+1)}{2}-\frac{\alpha(\alpha+1)}{2}-\delta<p$, the elements of $R$ are pairwise distinct modulo $p$ and do not cover $\mathbb{F}_p$.

define the polynomial:
\[Q_{d,\alpha,\delta}(\underline{X})=\prod_{x\in R}(X_1+\dots+X_d+m'-x)\prod_{1\leq i<j\leq d}(X_j-X_i)\prod_{\substack{1\leq i<j\leq d\\\textrm{and}\ j>\alpha}}(X_j+X_i)\]

Since $|R|=\frac{d(d+1)}{2}-\frac{\alpha(\alpha+1)}{2}-\delta=|C|$, the two polynomials $Q_{d,\alpha,\delta}$ and $P_{d,\alpha,\delta}$ have same degree. Moreover they differ only by constants in their linear factors, so they have the same coefficients of maximal degree. In particular, they have the same coefficient of the monomial $\left(\prod_{i=1}^\delta X_i^{d-\alpha+i-2}\right)\left(\prod_{i=\delta+1}^\alpha X_i^{d-\alpha+i-1}\right)\left(\prod_{i=\alpha+1}^dX_i^{d+i-1}\right)$.

If we consider all the sums $b_1+\dots+b_d+m'$ where $b_i\in B_i$ and
\[\prod_{1\leq i<j\leq d}(b_j-b_i)\prod_{\substack{1\leq i<j\leq d\\\textrm{and}\ j>\alpha}}(b_j+b_i)\neq 0,\] one can reach any value in $\left[0,\frac{d(d+1)}{2}-\frac{\alpha(\alpha+1)}{2}-\delta\right]$. Only one value for the sum does miss in $R$, $\frac{d(d+1)}{2}-\frac{\alpha(\alpha+1)}{2}-\delta$, and there is only one element, whose coordinates are pairwise neither equal nor opposite in this cartesian product and that reaches this value. It implies that there is only one point $\underline{b}^*$ in the cartesian product $B_1\times\dots\times B_d$ such that $Q_{d,\alpha,\delta}(\underline{b}^*)\neq 0$. Using the coefficient formula, one
\[c_{d,\alpha,\delta}=\sum_{\underline{b}\in \prod B_i}\frac{Q_{d,\alpha,\delta}(\underline{b})}{\prod g'_i(b_i)}=\frac{Q_{d,\alpha,\delta}(\underline{b}^*)}{\prod_{i=1}^dg_i'(b_i^*)} \neq 0,\]
where \[\underline{b}^*=(\underbrace{-(\alpha+1),\dots,-(\alpha-\delta+2)}_{i=1..\delta},\underbrace{-(\alpha-\delta),\dots,-1}_{i=\delta+1..\alpha},\alpha+1-\delta,\underbrace{\alpha+2,\dots,d}_{i=\alpha+2..d})\]

This coefficient is therefore different from zero, what concludes the proof.
\end{proof}

\begin{rem} The value of $c_{d,\alpha,\delta}$ can be compute from this formula. It is written in proposition \ref{comput2} in the annex of this article. 
\end{rem}

\section{The trouble in the consideration of a double bound}

It seems natural at this point to define the sets of subsums whose number of terms are doubly bounded:

\begin{defi} Let $A\subset \mathbb{F}_p$, we denote $\Sigma_\alpha^\beta(A)$ the set of subsums of at least $\alpha$ and at most $|A|-\beta$ pairwise distinct elements of $A$
\[\Sigma_\alpha^\beta(A)= \{a_1+\dots+a_k\mid a_i\in A,\ \alpha\leq k\leq |A|-\beta,\ a_i\neq a_j\}=\bigcup_{k=\alpha}^{|A|-\beta}(k^\wedge A).\]
\end{defi}

At first glance, one could think that for a set $A\subset \mathbb{F}_p$ such that $A\cap(-A)=\emptyset$ the minimal cardinality of such a set of subsums is again reached on an arithmetical progression of type $[1,d]$, and so that the cardinality of $|\Sigma_\alpha^\beta(A)|$ would be at least:
\[\min\left\{p,\frac{|A|(|A|+1)}{2}-\frac{\alpha(\alpha+1)}{2}-\frac{\beta(\beta+1)}{2}+1\right\}.\]

This does not hold and several counterexamples can be given: 

Let $k\geq 3$ and consider the set $A=\{1,-2,3,\dots,k\}$, then one has:
\begin{align*}
\Sigma_1^1(A)= & \left\{-2,-1,1,2,\dots,\frac{k(k+1)}{2}-5,\frac{k(k+1)}{2}-3,\frac{k(k+1)}{2}-2\right\},\\
\Sigma_2^1(A)= & \left\{-1,1,2,\dots,\frac{k(k+1)}{2}-5,\frac{k(k+1)}{2}-3,\frac{k(k+1)}{2}-2\right\}.
\end{align*}
Considered in $\mathbb{Z}$, one has $|\Sigma_1^1(A)|=\frac{k(k+1)}{2}-1=\frac{k(k+1)}{2}-1-1+1$ and $|\Sigma_2^1(A)|=\frac{k(k+1)}{2}-2=\left(\frac{k(k+1)}{2}-3-1+1\right)+1$.

So whenever $\frac{k(k+1)}{2}-4=p$, one has
\begin{align*}
|\Sigma_1^1(A)|= & p-1=\frac{k(k+1)}{2}-3<\min\left\{p,\frac{k(k+1)}{2}-1-1+1\right\},\\
|\Sigma_2^1(A)|= & p-1=\frac{k(k+1)}{2}-3<\min\left\{p,\frac{k(k+1)}{2}-3-1+1\right\}.
\end{align*}

It is conjectured (but not formally known) that there is an infinite number of couples $(k,p)$ such that $p=\frac{k(k+1)}{2}-4\in \mathbb{P}$. Here follows the list of those with $p<1000$:
\[(5,11),(6,17),(9,41),(14,101),(17,149),(18,167),\]\[(21,227),(26,347),(29,431),(30,461),(33,557),(41,857).\]

Conversely, it can be seen that, for some other prime numbers, the conjecture is true. It implies that the problem is of a different nature from the Cauchy-Davenport, Dias da Silva-Hamidoune theorems and theorem \ref{Main}. These three theorems can be called universal, since the bound is universal in $p$, the cardinality of the set (and their parameters).

However, for this problem, it is still possible to define a polynomial and a cartesian product that would lead to a proof of the bound, provided a specified coefficient is non zero. Of course, since counterexamples are known, for some values of the parameters $d,\alpha,\beta,p$, the specified coefficient will be zero. The computations of these coefficients lead to the idea that the previous counterexamples are the only ones possible. What can be summarized in the following conjecture:

\begin{conj} Let $p$ be a prime number and $A\subset \mathbb{F}_p$ such that $A\cap(-A)=\emptyset$, then
\[|\Sigma_\alpha^\beta(A)|\geqslant\min\left\{p,\frac{|A|(|A|+1)}{2}-\frac{\alpha(\alpha+1)}{2}-\frac{\beta(\beta+1)}{2}+1\right\},\]
unless $A=\lambda.\{1,-2,3,\dots,k\}$, with $\lambda\in\mathbb{F}_p^*$, $\frac{k(k+1)}{2}=p+4$ and $(\alpha,\beta)\in\{(1,1),(1,2),(2,1)\}$.
\end{conj}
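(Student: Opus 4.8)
The plan is to mimic the proof of Theorem \ref{Main} as closely as possible, inserting a \emph{second} one-sided sign restriction into the auxiliary polynomial. One may assume $\alpha+\beta\le d$ (the statement being trivial otherwise) and $p$ odd, write $A=\{2a_1,\dots,2a_d\}$, $m=\sum_{i=1}^da_i$, and argue by contradiction: assume $\Sigma_\alpha^\beta(A)\subseteq C$ with $|C|=\min\{p-1,\frac{d(d+1)}{2}-\frac{\alpha(\alpha+1)}{2}-\frac{\beta(\beta+1)}{2}\}$, and set $\delta=\max\{0,\frac{d(d+1)}{2}-\frac{\alpha(\alpha+1)}{2}-\frac{\beta(\beta+1)}{2}-(p-1)\}$, so that $|C|<p$. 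The natural candidate is
\[P(\underline X)=\prod_{x\in C}(X_1+\dots+X_d+m-x)\prod_{1\le i<j\le d}(X_j-X_i)\prod_{(i,j)\in S}(X_j+X_i),\]
where $S$ is the set of pairs $i<j$ that are not both at most $\beta$ and not both greater than $d-\alpha$. On a nested Cartesian product $A_1\times\dots\times A_d$ whose first $\beta$ factors contain only the values $-a_j$, whose last $\alpha$ factors contain only the values $a_j$, and whose middle factors contain a growing set of both, the Vandermonde factor forces all coordinates distinct and the last product forces that no two coordinates are opposite; hence a point where the last two factors survive is a sum $\pm a_1\pm\dots\pm a_d$ with at least $\beta$ minus and at least $\alpha$ plus signs, so $\pm a_1\pm\dots\pm a_d+m$ is a subsum of between $\alpha$ and $d-\beta$ elements of $A$ and the first factor vanishes. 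Thus $P$ vanishes on $\prod_iA_i$, one checks $\deg P=d^2+\frac{d(d-1)}{2}-\alpha^2-\beta^2-\delta=\sum_i(|A_i|-1)$, and by the Combinatorial Nullstellensatz it suffices to prove that the coefficient $c_{d,\alpha,\beta,\delta}$ of $\prod_iX_i^{|A_i|-1}$ is non-zero in $\mathbb{F}_p$.

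The next step is to evaluate $c_{d,\alpha,\beta,\delta}$. Being the leading coefficient of $P$, it depends only on $d,\alpha,\beta,\delta$ and not on $A$; one computes it either through the coefficient formula on the model set $B=2\cdot[1,d]$, for which $\Sigma_\alpha^\beta(B)$ is the interval $2\cdot[\frac{\alpha(\alpha+1)}{2},\frac{d(d+1)}{2}-\frac{\beta(\beta+1)}{2}]$ and the bound is sharp, or directly as a determinant of binomial coefficients of Gessel--Viennot type, generalizing Propositions \ref{comput1} and \ref{comput2}. This is where the device of the first section genuinely breaks: with a double restriction the extremal value of $\Sigma_\alpha^\beta(B)$ is in general reached by several admissible configurations in $\prod_iB_i$, so the coefficient formula no longer collapses to a single non-zero term and one must actually compute $c_{d,\alpha,\beta,\delta}$. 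One expects a closed form equal to a ratio of products of factorials times an explicit ``defect'' polynomial in $d,\alpha,\beta$. The conjecture then splits in two. First, for every parameter shape where this closed form is $\not\equiv0\pmod p$ the bound follows for \emph{all} admissible $A$, exactly as in the earlier theorems. Second, for the shapes where it does vanish modulo $p$ --- which the computations indicate are precisely $\frac{d(d+1)}{2}=p+4$ with $(\alpha,\beta)\in\{(1,1),(1,2),(2,1)\}$ --- the polynomial method is silent, and these finitely many residual shapes must be analysed by hand to show that the bound can fail only for $A=\lambda\cdot\{1,-2,3,\dots,k\}$.

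I expect the main obstacle to be exactly this last point, and it is presumably the reason the statement is only a conjecture. The one-sided coefficients reduce to clean products (a single binomial in the Cauchy--Davenport case, a Lindstr\"om--Gessel--Viennot count over non-intersecting paths that factors in the others), and their non-vanishing modulo $p$ is automatic since every integer involved is smaller than $p$; with two simultaneous restrictions the lattice-path model acquires two distinct forbidden regions, the determinant no longer visibly factors, and its reduction modulo $p$ becomes a genuine arithmetic statement rather than a size estimate. Determining that reduction exactly --- and then carrying out the narrow case analysis around $\frac{k(k+1)}{2}=p+4$ to recover precisely the family $\lambda\cdot\{1,-2,3,\dots,k\}$ and nothing more --- is where the real difficulty lies.
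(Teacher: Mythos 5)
This statement is the paper's closing \emph{conjecture}, not one of its theorems: the paper offers no proof of it, only the observation that a polynomial and a cartesian product can still be written down, that the relevant coefficient demonstrably vanishes for the exceptional parameters (as it must, because of the explicit counterexamples $\lambda\cdot\{1,-2,3,\dots,k\}$ with $\frac{k(k+1)}{2}=p+4$), and that computations ``lead to the idea'' that these are the only exceptions. Your proposal is essentially a faithful reconstruction of that program, and your setup is coherent: the extra product of factors $(X_j+X_i)$ over pairs not contained in the first $\beta$ or last $\alpha$ indices, together with a nested cartesian product whose first $\beta$ factors are negative-only and last $\alpha$ positive-only, does force every surviving point to be a full sign pattern with at least $\beta$ minus and at least $\alpha$ plus signs, hence a subsum of between $\alpha$ and $|A|-\beta$ terms, and your degree count matches your choice of factors. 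You also correctly identify the structural reason the method of the paper breaks here: with a double restriction the extremal interval for $B=2\cdot[1,d]$ is attained by several admissible points, so the coefficient formula no longer reduces to a single non-zero term, and the non-vanishing of the coefficient becomes a genuine arithmetic question modulo $p$ rather than a size argument.

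That said, be clear that what you have written is a research plan, not a proof, and the two places you flag as ``the real difficulty'' are exactly the unproven content of the conjecture. First, the claim that the coefficient is non-zero modulo $p$ for every non-exceptional parameter shape is not established by anything in the paper (nor by your outline); the paper only reports computational evidence. Second, for the exceptional shapes the task is not a ``finitely many residual shapes analysed by hand'': for a given $p$ with $\frac{d(d+1)}{2}=p+4$ and $(\alpha,\beta)\in\{(1,1),(1,2),(2,1)\}$, the conjecture asserts the bound for \emph{every} set $A$ of cardinality $d$ with $A\cap(-A)=\emptyset$ except the dilates of $\{1,-2,3,\dots,k\}$; since the coefficient vanishes there, the Combinatorial Nullstellensatz is silent for all such $A$, and one needs an inverse-type argument of a completely different nature. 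So your proposal matches the paper's own (non-)treatment and correctly diagnoses why the statement remains open, but it does not close either gap.
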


\section*{Annex: Computation of the coefficients}

In this annex, we denote $n!!=\prod_{i=0}^{n-1}i!$, the product of the $n$ first factorials. It is an unusual notation, but it satisfies the nice property $\prod_{1\leq i<j\leq n}(j-i)=n!!$.

\subsection{The coefficient involved in the proof of the Dias da Silva-Hamidoune theorem}

\begin{prop}\label{comput1} The coefficient involved in the proof of the Dias da Silva-Hamidoune theorem is:
\[c_{d,h,\delta}=(h(d-h))!\frac{\binom{d-h+\delta-1}{\delta}\binom{h}{\delta}}{\binom{h(d-h)}{\delta}}\frac{h!!(d-h)!!}{d!!}.\]
\end{prop}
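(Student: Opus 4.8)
The plan is to evaluate $c_{d,h,\delta}$ directly from the coefficient formula, which already isolated a single non-zero term in the proof of the Dias da Silva-Hamidoune theorem. Recall that $c_{d,h,\delta}$ is the coefficient of $\prod_{i=1}^\delta X_i^{d-h+i-2}\prod_{i=\delta+1}^h X_i^{d-h+i-1}$ in
\[Q_{d,h,\delta}(\underline X)=\prod_{x\in R}(X_1+\dots+X_h-x)\prod_{1\le i<j\le h}(X_j-X_i),\]
and by the coefficient formula applied to the sets $B_i=\{1,\dots,d-h+i-1\}$ for $i\le\delta$ and $B_i=\{1,\dots,d-h+i\}$ for $i>\delta$, only the point $\underline b^\ast=((d-h),\dots,(d-h+\delta-1),(d-h-\delta+1),\dots,d)$ contributes. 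So the first step is simply
\[c_{d,h,\delta}=\frac{Q_{d,h,\delta}(\underline b^\ast)}{\prod_{i=1}^h g_i'(b_i^\ast)},\]
and everything reduces to computing the numerator and denominator separately.

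For the denominator, each $g_i'(b_i^\ast)=\prod_{b\in B_i\setminus\{b_i^\ast\}}(b_i^\ast-b)$. Since $b_i^\ast$ is the top element $d-h+i-1$ of $B_i$ for $i\le\delta$ and the second-from-top element $d-h-\delta+i$ of the "longer" set $B_i=\{1,\dots,d-h+i\}$ for $i>\delta$ (note $d-h-\delta+i = (d-h+i)-\delta$, so there are $\delta$ elements above it... wait, only the top element $d-h+i$ is above it among $\{1,\dots,d-h+i\}$, so actually it's second from top), these products are essentially (shifted) factorials, giving $g_i'(b_i^\ast)=\pm(d-h+i-2)!$ or similar, and the sign bookkeeping must be tracked carefully. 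Multiplying over $i$ and simplifying will produce a ratio of factorials; I expect the $h!!\,(d-h)!!/d!!$ factor to emerge here, since $\prod_{1\le i<j\le h}(j-i)=h!!$ type identities govern these telescoping products of consecutive integers.

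For the numerator $Q_{d,h,\delta}(\underline b^\ast)$, the Vandermonde part $\prod_{i<j}(b_j^\ast-b_i^\ast)$ is again a product of consecutive-integer differences and evaluates to a ratio of factorials. The linear part $\prod_{x\in R}(b_1^\ast+\dots+b_h^\ast-x)$ is the crucial piece: $b_1^\ast+\dots+b_h^\ast$ equals the maximal value $\frac{d(d+1)}{2}-\frac{(d-h)(d-h+1)}{2}-\delta$, and $R=[\tfrac{h(h+1)}{2},\tfrac{d(d+1)}{2}-\tfrac{(d-h)(d-h+1)}{2}-\delta-1]$ has $h(d-h)-\delta$ elements all strictly below the sum, so this product is $\prod_{k=1}^{h(d-h)-\delta}k = (h(d-h)-\delta)!$. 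Combining the three pieces and re-expressing $(h(d-h)-\delta)!$ times the leftover factorials as $(h(d-h))!$ times the binomial correction $\binom{d-h+\delta-1}{\delta}\binom{h}{\delta}/\binom{h(d-h)}{\delta}$ is the final bookkeeping step.

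The main obstacle will be the middle step: correctly identifying the position of $b_i^\ast$ inside each $B_i$ (top element versus second-from-top, and how that changes across the threshold $i=\delta$) and keeping the signs and the index shifts consistent, so that the product $\prod_{i=1}^h g_i'(b_i^\ast)$ and the Vandermonde product $\prod_{i<j}(b_j^\ast-b_i^\ast)$ assemble into exactly the stated ratio of double factorials and binomials. This is pure algebra with no conceptual difficulty, but it is error-prone; I would organize it by first writing each factor as $\pm$ a factorial, collecting all signs at the end, and then recognizing the resulting expression as the claimed closed form via the identities $(n)!! = \prod_{i=0}^{n-1} i!$ and standard binomial manipulations.
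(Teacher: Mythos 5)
Your overall strategy coincides with the paper's: isolate the single non-vanishing term of the coefficient formula and grind the resulting factorials into the closed form. But as written the plan has a concrete error at its central step, the identification of the evaluation point. For \emph{every} index $i$ the correct point $\underline b^\ast$ takes the \emph{top} element of $B_i$: $b_i^\ast=d-h+i-1$ for $i\le\delta$ and $b_i^\ast=d-h+i$ for $i>\delta$ (i.e.\ the values $(d-h+\delta+1),\dots,d$ on the last $h-\delta$ coordinates; the displayed ``$d-h-\delta+1$'' is a typo one should not take literally). Your ``second-from-top element $d-h-\delta+i$'' cannot be right: for $\delta\ge 2$ those coordinates would repeat values already used by the first $\delta$ coordinates, so the Vandermonde factor of $Q_{d,h,\delta}$ would vanish at your point, and in any case their sum would be $\frac{d(d+1)}{2}-\frac{(d-h)(d-h+1)}{2}-\delta-(h-\delta)\delta$, contradicting your own (correct) assertion that $b_1^\ast+\dots+b_h^\ast$ equals the unique missing value $\frac{d(d+1)}{2}-\frac{(d-h)(d-h+1)}{2}-\delta$. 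With the correct point there is also no sign bookkeeping at all: $g_i'(b_i^\ast)=(d-h+i-2)!$ for $i\le\delta$ and $(d-h+i-1)!$ for $i>\delta$, all positive.

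Beyond this, the proposal defers exactly the content of the proposition. You compute only the easy factor $\prod_{x\in R}(b_1^\ast+\dots+b_h^\ast-x)=(h(d-h)-\delta)!$ and then ``expect'' the rest to emerge. What must actually be shown is that the Vandermonde at $\underline b^\ast$, being the difference product of the set $\{0,1,\dots,h\}\setminus\{\delta\}$ (after the common shift by $d-h-1$), equals $h!!\binom{h}{\delta}$; that the denominator telescopes as
\[\prod_{i=1}^{\delta}(d-h+i-2)!\prod_{i=\delta+1}^{h}(d-h+i-1)!=\frac{(d-h-1)!}{(d-h+\delta-1)!}\cdot\frac{d!!}{(d-h)!!}
=\frac{d!!}{\delta!\binom{d-h+\delta-1}{\delta}(d-h)!!}\cdot\frac{1}{1};\]
and that $(h(d-h)-\delta)!\,\delta!=(h(d-h))!/\binom{h(d-h)}{\delta}$ assembles everything into the stated formula. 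None of these identities is established in the proposal, and the misplaced evaluation point would derail the middle one, so the argument as it stands does not reach the claimed closed form.
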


\begin{proof}

The computation of the coefficient can be continued:
\[c_{d,h,\delta}=\frac{Q_{d,h,\delta}(\underline{b}^*)}{\prod_{i=1}^dg_i'(b^*_i)},\]
where \[\underline{b}^*=(\underbrace{(d-h),\dots,(d-h+\delta-1)}_{i=1..\delta},\underbrace{(d-h-\delta+1),\dots,d}_{i=\delta+1..h}).\]

Since $g'_i(b_i)=(|B_i|-1)!=(d-h+i-2)!$ if $i\leq \delta$ and $g'_i(b_i)=(|B_i|-1)!=(d-h+i-1)!$ if $i>\delta$, the multinomial sum gives $|R|!=(h(d-h)-\delta)!$ and the Vandermonde is $\binom h\delta h!!$
\begin{align*}
c_{d,h,\delta}= & \frac{(h(d-h)-\delta)!}{\prod_{i=1}^\delta(d-h+i-2)!\prod_{i=\delta+1}^h(d-h+i-1)!}\binom h\delta h!!\\
= & \frac{(h(d-h)-\delta)!}{\prod_{i=d-h-1}^{d-h+\delta-2} i!\prod_{i=d-h+\delta}^{d-1} i !}\binom h\delta h!!\\
= & \frac{(h(d-h)-\delta)!}{\frac{(d-h-1)!}{(d-h+\delta-1)!}\frac{d!!}{(d-h)!!}}\binom h\delta h!!\\
= & \delta!((h(d-h)-\delta)!)\binom{d-h+\delta-1}{\delta}\binom{h}{\delta}\frac{h!!(d-h)!!}{d!!}\\
= & (h(d-h))!\frac{\binom{d-h+\delta-1}{\delta}\binom{h}{\delta}}{\binom{h(d-h)}{\delta}}\frac{h!!(d-h)!!}{d!!}.\\
\end{align*}
\end{proof}

\subsection{The coefficient involved in the proof of Theorem \ref{Main}}

\begin{prop}\label{comput2} Denoting $m_{d,\alpha}=d(d+1)/2-\alpha(\alpha+1)/2$. One has\[c_{d,\alpha,\delta}=\frac{2^{m_{d,\alpha}-\delta}(m_{d,\alpha})!}{\binom{m_{d,\alpha}}{\delta}}\frac{\binom{d-\alpha+\delta-1}{\delta}\binom{\alpha+1}\delta\binom{d+\alpha+1}\delta}{\binom{2\alpha+2}\delta}\frac{\alpha!!(d-\alpha)!!(d+\alpha+1)!!}{d!!(2d+1)!!}\left(\prod_{i=\alpha+1}^d(2i-1)!\right).\]
\end{prop}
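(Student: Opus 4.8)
\textbf{Proof plan for Proposition \ref{comput2}.}

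The plan is to pick up exactly where the proof of Theorem \ref{Main} stopped, namely from the coefficient formula evaluated at the single surviving point $\underline{b}^*$,
\[c_{d,\alpha,\delta}=\frac{Q_{d,\alpha,\delta}(\underline{b}^*)}{\prod_{i=1}^d g_i'(b_i^*)},\]
and to evaluate the numerator and denominator separately. For the denominator, since each $B_i$ is (the negative part of) an interval or a union of two symmetric intervals, $g_i'(b_i^*)$ is $\pm$ a product of consecutive integers. As in the Dias da Silva-Hamidoune computation (Proposition \ref{comput1}), for $i\le\alpha$ the set $B_i$ is an interval of length $|B_i|-1$ and $b_i^*$ is its extreme value, so $|g_i'(b_i^*)|=(|B_i|-1)!$; for $i\ge\alpha+1$, $B_i=[-d,-1]\cup[1,i]$ has $|B_i|=d+i$ and $b_i^*$ sits near the right end, so $|g_i'(b_i^*)|$ is a ratio of factorials times a product of the form $(2\cdot\text{something})\cdots$ coming from the gap at $0$. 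Collecting the factorial products over $i\le\alpha$ will produce, just as before, the factor $(d-\alpha+\delta-1)!/(d-\alpha-1)!$ together with $d!!/(d-\alpha)!!$ after telescoping $\prod i!$; the new feature is the second block $i\in[\alpha+1,d]$, which I expect to contribute the $(d+\alpha+1)!!/d!!$-type telescoping together with the loose product $\prod_{i=\alpha+1}^d(2i-1)!$ and a compensating power of $2$.

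For the numerator $Q_{d,\alpha,\delta}(\underline{b}^*)$, I would factor it as the product of three pieces matching the three products defining $Q_{d,\alpha,\delta}$. The linear factors $\prod_{x\in R}(b_1^*+\dots+b_d^*+m'-x)$: since $b_1^*+\dots+b_d^*+m'$ equals the unique missed value $m_{d,\alpha}-\delta$ and $R=[0,m_{d,\alpha}-\delta-1]$, this product is $(m_{d,\alpha}-\delta)!$. The Vandermonde-type piece $\prod_{1\le i<j\le d}(b_j^*-b_i^*)$ and the extra antisymmetric piece $\prod_{i<j,\,j>\alpha}(b_j^*+b_i^*)$ are where the combinatorics lives: one must recognize that, after the change of variables making the first $\alpha$ coordinates the negatives of a shifted interval and the remaining ones essentially $\alpha+1-\delta,\alpha+2,\dots,d$, these two products evaluate to a product of consecutive-integer differences, i.e. to ratios of the double-factorials $\alpha!!$, $(d-\alpha)!!$, $(d+\alpha+1)!!$, $(2d+1)!!$ and the binomial correction factors $\binom{\alpha+1}{\delta}$, $\binom{d+\alpha+1}{\delta}$, $\binom{2\alpha+2}{\delta}$. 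The exponent $m_{d,\alpha}-\delta$ on $2$ should appear because each linear factor in $R$ was written for the doubled set $B=2.[1,d]$ versus the halved coordinates used in $\underline{b}^*$; tracking this factor of $2$ per linear factor is a bookkeeping point that must be done carefully but is not deep.

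The main obstacle I anticipate is the evaluation of the combined antisymmetric product $\prod_{1\le i<j\le d}(b_j^*-b_i^*)\prod_{i<j,\,j>\alpha}(b_j^*+b_i^*)$ at $\underline{b}^*$, because $\underline{b}^*$ is not a clean arithmetic progression: the first $\delta$ coordinates, the next $\alpha-\delta$ coordinates, and the single ``defect'' coordinate $\alpha+1-\delta$ must each be handled, and the sign pattern (negatives on the first $\alpha$ coordinates, positives afterward) interacts with the $X_j+X_i$ factors in a way that effectively turns the $d$ values into $2d$ or so nearly-consecutive integers with one or two gaps. I expect the cleanest route is to introduce auxiliary variables $y_i=-b_i^*$ for $i\le\alpha$ and $y_i=b_i^*$ for $i>\alpha$, observe that $\{y_1,\dots,y_d\}\cup\{-y_{\alpha+1},\dots,-y_d\}$ is, up to the single defect, the symmetric set $\{\pm1,\dots,\pm d,\dots\}$ (this is precisely why $\prod_{i<j}(y_j-y_i)\prod(y_j+y_i)$ collapses to double factorials, as in the Dias da Silva-Hamidoune annex computation), and then account for the defect $\alpha+1-\delta$ versus the ``expected'' value $\alpha+1$ by a single correcting ratio, which is exactly where the binomial factors $\binom{\alpha+1}{\delta}$, $\binom{d+\alpha+1}{\delta}$ and $\binom{2\alpha+2}{\delta}$ come from. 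Once all pieces are assembled, the final formula follows by multiplying numerator and denominator and rewriting $\delta!$ and the factorial ratios as the binomial coefficients displayed in the statement; I would not expect this last rearrangement to be more than routine algebra.
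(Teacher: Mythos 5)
Your plan follows exactly the route of the paper's annex computation: start from the coefficient formula evaluated at the unique surviving point $\underline{b}^*$, treat the denominator $\prod_i g_i'(b_i^*)$ by casework on $i$, split $Q_{d,\alpha,\delta}(\underline{b}^*)$ into the three defining products, and assemble. Your identification of the pieces is sound: the linear factors do give $2^{m_{d,\alpha}-\delta}(m_{d,\alpha}-\delta)!$ (at $\underline{b}^*$ the sum $b_1^*+\dots+b_d^*+m'$ equals $2(m_{d,\alpha}-\delta)$ and the set $R$ is to be read as $2\cdot[0,m_{d,\alpha}-\delta-1]$, so each factor carries the factor $2$ you anticipate), the first $\alpha$ coordinates sit at the extremes of intervals so $g_i'(b_i^*)=(|B_i|-1)!$, and the coordinates $i>\alpha+1$ give $(d+i)!/i$.

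The gap is that the proposition \emph{is} the closed formula, and your proposal never actually performs the evaluations from which every nontrivial factor in it arises; it only records the expectation that they ``collapse to double factorials'' with ``a single correcting ratio.'' Concretely, three computations are left undone: (i) $g'_{\alpha+1}(b^*_{\alpha+1})$ at the defect coordinate $\alpha+1-\delta$, which is not an extreme point of $B_{\alpha+1}$ and yields $(-1)^\delta\delta!\,(d+\alpha+1-\delta)!/(\alpha-\delta+1)$, the source of the $\binom{d+\alpha+1}{\delta}$ and $(\alpha-\delta+1)$ corrections; (ii) the Vandermonde $\prod_{i<j}(b_j^*-b_i^*)$ and (iii) the symmetric product $\prod_{i<j,\ j>\alpha}(b_j^*+b_i^*)$, each of which must be split into the five (resp.\ four) blocks determined by the thresholds $\delta$, $\alpha$, $\alpha+1$ and telescoped against the two defects (the negative block is $-(\{1,\dots,\alpha+1\}\setminus\{\alpha-\delta+1\})$, the positive block replaces $\alpha+1$ by $\alpha+1-\delta$), producing the factors $\binom{\alpha+1}{\delta}$, $\binom{2\alpha+2}{\delta}$, $\binom{d-\alpha+\delta-1}{\delta}$, the double factorials $\alpha!!\,(d-\alpha)!!\,(d+\alpha+1)!!/(d!!\,(2d+1)!!)$ and the residual product $\prod_{i=\alpha+1}^d(2i-1)!$. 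Your heuristic of symmetrizing via $y_i=\pm b_i^*$ and the factorization $(b_j-b_i)(b_j+b_i)=y_j^2-y_i^2$ is a reasonable organizing idea, but with $\delta>0$ there are two interacting irregularities, not one, and the cancellation pattern among $\binom{\alpha+1}{\delta}$, $\binom{\alpha}{\delta}$, $\binom{d+\alpha+1}{\delta}$ and $\binom{2\alpha+2}{\delta}^2$ (an intermediate squared binomial appears and is only partly cancelled by the third product) is exactly the content of the proposition; asserting that the ``last rearrangement'' is routine does not verify the stated identity. As it stands the proposal is a correct plan with no wrong step, but the proof itself -- the chain of factorial and double-factorial identities -- is missing.
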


\begin{proof}

The computation of the coefficient can be continued:
\[c_{d,\alpha,\delta}=\frac{Q_{d,\alpha,\delta}(\underline{b}^*)}{\prod_{i=1}^dg_i'(b_i)},\]
with \[\underline{b}^*=(\underbrace{-(\alpha+1),\dots,-(\alpha-\delta+2)}_{i=1..\delta},\underbrace{-(\alpha-\delta),\dots,-1}_{i=\delta+1..\alpha},\alpha-\delta+1,\underbrace{\alpha+2,\dots,d}_{i=\alpha+2..d}).\]

One has:
\[g'_i(b^*_i)=\begin{cases} (d-\alpha+i-2)! & \textrm{if}\ i\leq\delta,\\
(d-\alpha+i-1)! & \textrm{if}\ \delta<i\leq\alpha,\\
(-1)^\delta \delta!\frac{(d+\alpha+1-\delta)!}{(\alpha-\delta+1)}=(-1)^\delta\frac{(d+\alpha+1)!}{(\alpha-\delta+1)\binom{d+\alpha+1}{\delta}}& \textrm{if}\ i=\alpha+1\\
\frac{(d+i)!}{i} & \textrm{if}\ i>\alpha+1
\end{cases}\]
so the product of their inverse is:

\begin{align*}
\frac{1}{\prod_{i=1}^dg'_i(b^*_i)}= & (-1)^\delta\left(\prod_{i=1}^\delta\frac{1}{(d-\alpha+i-2)!}\right)\left(\prod_{i=\delta+1}^\alpha\frac{1}{(d-\alpha+i-1)!}\right)\\
 & \hspace{2cm}\times\binom{d+\alpha+1}{\delta}\frac{(\alpha-\delta+1)}{(d+\alpha+1)!}\left(\prod_{i=\alpha+2}^d\frac{i}{(d+i)!}\right)\\
\end{align*}
\begin{align*}
 = & (-1)^\delta \frac{(d-\alpha+\delta-1)!}{(d-\alpha-1)!}\left(\prod_{i=1}^\alpha\frac{1}{(d-\alpha+i-1)!}\right)\\
 & \hspace{2cm}\times\binom{d+\alpha+1}{\delta}(\alpha-\delta+1)\frac{d!}{(\alpha+1)!}\left(\prod_{i=\alpha+1}^d\frac{1}{(d+i)!}\right)\\
 = & (-1)^\delta \frac{(d-\alpha+\delta-1)!}{(d-\alpha-1)!}\frac{(d-\alpha)!!}{d!!}\\
 & \hspace{2cm}\times\binom{d+\alpha+1}{\delta}(\alpha-\delta+1)\frac{d!}{(\alpha+1)!}\frac{(d+\alpha+1)!!}{(2d+1)!!}\\
 = & (-1)^\delta \delta!\binom{d-\alpha+\delta-1}{\delta}\binom{d+\alpha+1}{\delta}(\alpha-\delta+1)\frac{d!}{(\alpha+1)!}\frac{(d-\alpha)!!(d+\alpha+1)!!}{d!!(2d+1)!!}.
\end{align*}

The first factor of $Q_{d,\alpha,\delta}$ gives:
\[\prod_{x\in R}(b^*_1+\dots+b^*_d+m-x)=2^{|R|}|R|!=2^{m_{d,\alpha}-\delta}(m_{d,\alpha}-\delta)!\]

The second factor is
\begin{align*}
\prod_{1\leq i<j\leq d}(b^*_j-b^*_i) = & \left(\prod_{1\leq i<j\leq \alpha}\times\prod_{\substack{i=1\\(j=\alpha+1)}}^\alpha\times\prod_{\substack{1\leq i\leq\alpha\\\alpha+1<j\leq d}}\times\prod_{\substack{j=\alpha+2\\(i=\alpha+1)}}^d\times\prod_{\alpha+1<i<j\leq d}\right)(b^*_j-b^*_i)\\
= & \left(\alpha!!\binom\alpha\delta\right)\left(\frac{(2\alpha-\delta+2)!}{(2\alpha-2\delta+2)(\alpha-\delta+1)!}\right)\\
 & \hspace{2cm}\times\left(\prod_{i=1}^\delta\frac{(d+\alpha-i+2)!}{(2\alpha-i+3)!}\prod_{i=\delta+1}^\alpha\frac{(d+\alpha-i+1)!}{(2\alpha-i+2)!}\right)\\
 & \hspace{4cm}\times\left(\frac{(d-\alpha+\delta-1)!}{\delta!}\right) (d-\alpha-1)!!\\
= & \alpha!!\binom\alpha\delta\frac{(2\alpha-\delta+2)!}{(2\alpha-2\delta+2)(\alpha-\delta+1)!}\\
 & \hspace{2cm}\times\frac{(d+\alpha+2)!!(2\alpha-\delta+3)!!}{(d+\alpha-\delta+2)!!(2\alpha+3)!!}\frac{(d+\alpha-\delta+1)!!(\alpha+2)!!}{(d+1)!!(2\alpha-\delta+2)!!}\\
 & \hspace{4cm}\times\binom{d-\alpha+\delta-1}\delta(d-\alpha)!!\\
= & \binom{d-\alpha+\delta-1}\delta\binom\alpha\delta\alpha!!(d-\alpha)!!\\
 & \hspace{2cm}\times\frac{(2\alpha-\delta+2)!}{(2\alpha-2\delta+2)(\alpha-\delta+1)!}\frac{(2\alpha-\delta+2)!}{(d+\alpha-\delta+1)!}\\
 & \hspace{4cm}\times\frac{(d+\alpha+2)!!(\alpha+2)!!}{(d+1)!!(2\alpha+3)!!}\\
\end{align*}
\begin{align*}
= & \binom{d-\alpha+\delta-1}\delta\binom\alpha\delta\alpha!!(d-\alpha)!!\\
 & \hspace{2cm}\times\frac{(2\alpha-\delta+2)!}{(2\alpha-2\delta+2)(\alpha-\delta+1)!}\frac{(2\alpha-\delta+2)!}{(d+\alpha-\delta+1)!}\frac{(d+\alpha+1)!(\alpha+1)!}{(2\alpha+1)!(2\alpha+2)!}\\
 & \hspace{4cm}\times\frac{(d+\alpha+1)!!(\alpha+1)!!}{(d+1)!!(2\alpha+1)!!}\\
= & \frac{(\alpha+1)}{(\alpha-\delta+1)}\frac{\binom{d-\alpha+\delta-1}\delta\binom\alpha\delta\binom{\alpha+1}\delta\binom{d+\alpha+1}\delta}{\binom{2\alpha+2}\delta^2}\frac{\alpha!!(d-\alpha)!!(d+\alpha+1)!!(\alpha+1)!!}{(d+1)!!(2\alpha+1)!!}.
\end{align*}

The last factor is:
\begin{align*}
\prod_{\substack{1\leq i<j\leq d\\ j>\alpha}}(b^*_j+b^*_i)= & \left(\prod_{\substack{i=1\\(j=\alpha+1)}}^\alpha\times\prod_{\substack{1\leq i\leq\alpha\\\alpha+1<j\leq d}}\times\prod_{\substack{j=\alpha+2\\(i=\alpha+1)}}^d\times\prod_{\alpha+1< i<j\leq d}\right)(b^*_j+b^*_i)\\
 = & \left((-1)^\delta\delta!(\alpha-\delta)!\right) \left(\prod_{j=\alpha+2}^d\frac{(j-1)!}{(j-\alpha+\delta-1)(j-\alpha-2)!}\right)\\
 & \hspace{2cm}\times\left(\frac{(d+\alpha-\delta+1)!}{(2\alpha-\delta+2)!}\right) \left(\prod_{i=\alpha+2}^{d-1}\frac{(d+i)!}{(2i)!}\right)\\
= & (-1)^\delta\frac{\alpha!}{\binom\alpha\delta}\frac{\delta!}{(d-\alpha+\delta-1)!}\frac{d!!}{(\alpha+1)!!(d-\alpha-1)!!}\\
 & \hspace{2cm}\times\frac{(d+\alpha-\delta+1)!}{(2\alpha-\delta+2)!}\frac{(2d)!!}{(d+\alpha+2)!!}\prod_{i=\alpha+2}^{d-1}\frac{1}{(2i)!}\\
 = & (-1)^\delta\frac{\binom{2\alpha+2}{\delta}}{\binom\alpha\delta\binom{d+\alpha+1}\delta\binom{d-\alpha+\delta-1}\delta}\frac{d!!(2d)!!}{\alpha!!(d-\alpha)!!(d+\alpha+1)!!}\prod_{i=\alpha+1}^{d-1}\frac{1}{(2i)!}.
\end{align*}

This gives the value of $Q_{d,\alpha,\delta}(\underline{b}^*)$:
\begin{align*}
Q_{d,\alpha,\delta}(\underline{b}^*)= & (-1)^\delta2^{m_{d,\alpha}-\delta}(m_{d,\alpha}-\delta)!\frac{(\alpha+1)}{(\alpha-\delta+1)}\frac{\binom{\alpha+1}\delta}{\binom{2\alpha+2}\delta} \frac{(\alpha+1)!!(2d)!!}{d!(2\alpha+1)!!}\prod_{i=\alpha+1}^{d-1}\frac{1}{(2i)!}\\
 = & (-1)^\delta 2^{m_{d,\alpha}-\delta}(m_{d,\alpha}-\delta)!\frac{(\alpha+1)}{(\alpha-\delta+1)}\frac{\binom{\alpha+1}\delta}{\binom{2\alpha+2}\delta}\frac{(\alpha+1)!!}{d!}\prod_{i=\alpha+1}^d(2i-1)!\\
\end{align*}

And finally

\begin{align*}
c_{d,\alpha,\delta}= & 2^{m_{d,\alpha}-\delta}(m_{d,\alpha}-\delta)!\frac{(\alpha+1)}{(\alpha-\delta+1)}\frac{\binom{\alpha+1}\delta}{\binom{2\alpha+2}\delta}\frac{(\alpha+1)!!}{d!}\prod_{i=\alpha+1}^d(2i-1)!\\
 & \hspace{2cm}\times\delta!\binom{d-\alpha+\delta-1}{\delta}\binom{d+\alpha+1}{\delta}(\alpha-\delta+1)\frac{d!}{(\alpha+1)!}\frac{(d-\alpha)!!(d+\alpha+1)!!}{d!!(2d+1)!!}\\
= & \frac{2^{m_{d,\alpha}-\delta}(m_{d,\alpha})!}{\binom{m_{d,\alpha}}{\delta}}\frac{\binom{d-\alpha+\delta-1}{\delta}\binom{\alpha+1}\delta\binom{d+\alpha+1}\delta}{\binom{2\alpha+2}\delta}\frac{\alpha!!(d-\alpha)!!(d+\alpha+1)!!}{d!!(2d+1)!!}\prod_{i=\alpha+1}^d(2i-1)!\\
\end{align*}

\end{proof}

\end{document}